\documentclass[12pt]{article}

\usepackage[T1]{fontenc}
\usepackage[utf8]{inputenc}
\usepackage{lmodern}
\usepackage{microtype}
\usepackage{amsmath,amssymb,amsthm,mathtools}
\usepackage{enumitem}
\usepackage{xcolor}
\usepackage{hyperref}
\usepackage[nameinlink,noabbrev]{cleveref}

\hypersetup{
  colorlinks=true,
  linkcolor=blue,
  citecolor=blue,
  urlcolor=blue,
  pdftitle={The L log L-Luxemburg Discrepancy for Nonnegative Weights Suffers from the Curse of Dimensionality},
  pdfauthor={Josef Dick and Friedrich Pillichshammer},
  pdfsubject={Luxemburg discrepancy and the curse of dimensionality},
  pdfkeywords={discrepancy, Orlicz space, Luxemburg norm, L log L, curse of dimensionality, tractability}
}

\newtheorem{theorem}{Theorem}[section]
\newtheorem{corollary}[theorem]{Corollary}
\newtheorem{lemma}[theorem]{Lemma}
\newtheorem{proposition}[theorem]{Proposition}
\newtheorem{remark}[theorem]{Remark}

\newcommand{\E}{\mathbb{E}}
\newcommand{\N}{\mathbb{N}}

\newcommand{\ind}{\mathbf{1}}
\newcommand{\dd}{\,\mathrm{d}}
\newcommand{\disc}{\operatorname{disc}}
\newcommand{\R}{\mathbb{R}}
\newcommand{\norm}[1]{\left\lVert #1\right\rVert}
\newcommand{\bst}{\boldsymbol{t}}
\newcommand{\bss}{\boldsymbol{s}}
\newcommand{\bsx}{\boldsymbol{x}}

\newcommand{\bszero}{\boldsymbol{0}}
\newcommand{\cF}{\mathcal{F}}
\newcommand{\cP}{\mathcal{P}}
\newcommand{\cA}{\mathcal{A}}
\newcommand{\qmc}{{\rm qmc}}

\title{The \texorpdfstring{$L\log L$}{L log L}-Luxemburg Discrepancy for Nonnegative Weights Suffers from the Curse of Dimensionality}
\author{Josef Dick\thanks{J. D. was supported in part by a grant of access to OpenAI models through the ChatGPT for Academic Researchers program.}\ \ and Friedrich Pillichshammer\thanks{F.P. was funded in part by the Austrian Science Fund (FWF) 10.55776/PAT1173425.}}
\date{}

\begin{document}

\maketitle

\begin{abstract}
We study the normalized discrepancy in the Luxemburg norm generated by \(\Phi(s)=s\log(1+s)\). For arbitrary nonnegative quadrature weights, we prove that the associated information complexity suffers from the curse of dimensionality: for every \(\varepsilon\in(0,1)\), there exist constants \(c_\varepsilon>0\) and \(d_0=d_0(\varepsilon)\in\mathbb N\) such that
\[
N_{\Phi,+}(\varepsilon,d)\ge c_{\varepsilon} d^{-1}(1.07735\ldots)^d \qquad \mbox{for all } d\ge d_0.
\]
The result therefore also applies to equal-weight quasi-Monte Carlo rules.
\end{abstract}

\medskip
\noindent\textbf{Keywords:}
\(L\log L\)-Luxemburg norm; Orlicz norm discrepancy; curse of dimensionality; numerical integration; quasi-Monte Carlo.

\smallskip
\noindent\textbf{2020 Mathematics Subject Classification:}
11K38, 46E30, 65C05, 65D30, 65Y20.

\section{Introduction}

Let $\cP=\{\bsx_1,\ldots,\bsx_N\}\subseteq [0,1]^d$ be an $N$-element point set, and let \(\cA=\{a_1,\ldots,a_N\}\) be corresponding real weights. For \(\bst=(t_1,\ldots,t_d)\in[0,1]^d\), define the generalized discrepancy function by 
\[
\Delta_{\cP,\cA}(\bst) =\sum_{k=1}^N a_k \mathbf{1}_{\{\bsx_k<\bst\}}-\prod_{j=1}^d t_j,
\]
where \(\mathbf{1}_{\{\bsx_k<\boldsymbol t\}}\) equals 1 if, for the $j$th component $x_{k,j}$ of $\bsx_k$, we have \(x_{k,j}<t_j\) for every \(j\in\{1,\ldots,d\}\), and equals 0 otherwise. For equal weights \(a_k=1/N\), also referred to as quasi-Monte Carlo (QMC) weights, this is the classical local discrepancy function of the point set~\(\cP\). For later use we introduce the abbreviations \begin{equation}\label{eq:Gintro}
 G_{\cP,\cA}(\boldsymbol t)
 :=\sum_{k=1}^N a_k
 \ind_{\{\boldsymbol x_k<\boldsymbol t\}}
 \qquad \mbox{and} \qquad
 v_d(\boldsymbol t):=\prod_{j=1}^d t_j,
\end{equation}
so that $\Delta_{\cP,\cA}$ can be written as $\Delta_{\cP,\cA}(\boldsymbol t)=G_{\cP,\cA}(\boldsymbol t)-v_d(\boldsymbol t)$. 

Taking a norm of the discrepancy function leads to the concept of a discrepancy. Most commonly, $L_p$-norms are studied, but discrepancies with respect to other norms also appear in the literature and applications. 

In addition to the classical approach initiated by Roth~\cite{Roth}, which investigates the asymptotic behavior of discrepancies for fixed dimension $d$ as the number $N$ of points tends to infinity (see, e.g., \cite{BC,DT97,kuinie}), the focus has increasingly shifted over the last two decades to the information-based complexity (IBC) perspective, where interest lies in the behavior of the inverse discrepancy with respect to the dimension and a prescribed relative error (see, e.g., \cite{NW10}). More precisely, the inverse discrepancy for dimension $d$ and error threshold $\varepsilon \in (0,1)$ is the minimal number of points required for the discrepancy to fall below an $\varepsilon$-fraction of the so-called initial discrepancy -- that is, the discrepancy of the empty set. From Wo\'{z}niakowski’s work \cite{Wo99} on $L_2$-discrepancy and the seminal work of Heinrich, Novak, Wasilkowski and Wo\'{z}niakowski~\cite{hnww} to the recent contributions by Novak and the second author \cite{NP25,NP26} and by the first author and Sevim~\cite{DS26}, we now have a very clear picture of the overall situation regarding $L_p$-discrepancies for nonnegative weights $a_k \ge 0$: While the $L_p$-discrepancy for finite $p\ge 1$ suffers from the curse of dimensionality, that is, its inverse increases exponentially fast with the dimension $d$, the $L_\infty$-discrepancy is polynomially tractable, that is, its inverse grows at most polynomially in $d$ and $\varepsilon^{-1}$ (see the overview \cite{NP26} and the references therein).

In addition to $L_p$-discrepancies, other notions of discrepancy have also been investigated with regard to the curse of dimensionality and tractability. Examples include the BMO seminorm in \cite{P22} and Orlicz norms in \cite{DickEtAl2020}. This paper complements the recent developments by considering the $L \log L$-Luxemburg norm, thereby adding the $L \log L$
case to the overall picture regarding the curse of dimensionality and the tractability of discrepancies with nonnegative weights. Our main result, Theorem~\ref{thm:main}, shows that, for every fixed \(\varepsilon\in(0,1)\), the number of nodes required to reduce the \(L\log L\)-Luxemburg discrepancy below an \(\varepsilon\)-fraction of its initial value grows exponentially with \(d\), even when arbitrary nonnegative weights are allowed.  

\section{\texorpdfstring{$L \log L$}{L log L}-Luxemburg norm discrepancy -- the main result}

Prominent examples of norms for discrepancies beyond the $L_p$-norm are Orlicz norms. These provide a scale of norms between and beyond the classical $L_p$-norms and have been used to study discrepancy from the classical as well as the IBC point of view; see, for example, \cite{BLPV,BM,DHMP,DickEtAl2020} and the general introduction to Orlicz spaces in \cite{RaoRen1991}.  In this paper we consider the Young function
\begin{equation}\label{eq:Phi}
 \Phi(s)=s\log(1+s) \qquad \mbox{on } \mathbb{R}_0^+,
\end{equation}
and the associated Luxemburg norm on \([0,1]^d\),
\begin{equation}\label{eq:Luxemburg}
 \norm{f}_{\Phi}
 :=\inf\left\{K>0:
 \int_{[0,1]^d}\Phi\!\left(\frac{|f(\boldsymbol t)|}{K}\right)
 \dd\boldsymbol t\leq1\right\}.
\end{equation}
This is the usual \(L\log L\)-Luxemburg norm corresponding to the particular normalization in~\eqref{eq:Phi}. As Lemma~\ref{lem:L1-embedding} and Remark~\ref{re:1} show, for any measurable $f$ on $[0,1]^d$ we have $\|f\|_{L_1} \le \kappa \|f\|_{\Phi}$ with $\kappa>0$ (see Lemma~\ref{lem:L1-embedding}) and $\|f\|_{\Phi} \le C_p \|f\|_{L_p}$ for some $C_p>0$, for all $p>1$. So $\|f\|_{\Phi}$ lies in some sense between the $L_1$-norm and the $L_p$-norms with $p>1$. 

For \(N\in\N\), $N$-element sets $\cP$ in $[0,1]^d$ and corresponding real weights $\cA=\{a_1,\ldots,a_N\}$ we define the \(L\log L\)-Luxemburg discrepancy by
$$D_N^{\Phi}(\cP,\cA):=\|\Delta_{\cP,\cA}\|_{\Phi}$$
and the $N$-th minimal \(L\log L\)-Luxemburg discrepancy for nonnegative weights 
\[
 \disc_{\Phi,+}(N,d)
 :=\inf_{\substack{\boldsymbol x_1,\ldots,\boldsymbol x_N\in[0,1]^d\\
                       a_1,\ldots,a_N\ge 0}} D_N^{\Phi}(\cP,\cA),
\]
and for QMC weights 
\[
\disc_{\Phi,\qmc}(N,d) :=\inf_{\boldsymbol x_1,\ldots,\boldsymbol x_N\in[0,1]^d } D_N^{\Phi}(\cP,\{1/N,\ldots,1/N\}).
\]
Furthermore, let $\disc_{\Phi}(0,d):=\norm{v_d}_{\Phi}$ be the initial discrepancy, that is, the discrepancy of the empty set. We will see in Proposition~\ref{prop:initial} that the initial discrepancy is of order \(d2^{-d}\); the subsequent complexity result concerns the number of nodes required to reduce this quantity by a fixed relative factor. 

\begin{remark}\label{re:duality}\rm
To place the definition of the \(L\log L\)-Luxemburg discrepancy in a broader context, we briefly explain its connection to numerical integration. The associate norm of the Luxemburg norm $\|\cdot\|_{\Phi}$ is defined as 
\begin{equation}\label{def:assoc:norm}
\norm{g}_{\Phi,*} :=\sup_{\norm{f}_{\Phi} \le1} \int_{[0,1]^d} |f(\bst)g(\bst)|\dd\bst.
\end{equation}     
Let $\cF_d$ be the anchored Sobolev--Orlicz space consisting of all functions $h$ on $[0,1]^d$ for which there exists a measurable function $g$ with $\|g\|_{\Phi,\ast}<\infty$ such that
\[
h(\bsx)=\int_{[\bszero,\bsx]}g(\bst)\dd\bst.
\]
Write $Dh:=g$, which is uniquely determined almost everywhere, and equip this space with the norm $\|h\|_{\cF_d}:=\|Dh\|_{\Phi,\ast}$.

For integration in $\cF_d$ let $I(f) = \int_{[0,1]^d} f(\bst) \dd \bst$, and use $\cA$-weighted linear rules $Q_{\cP,\cA}(f)=\sum_{k=1}^N a_k f(\bsx_k)$, where $\cP=\{\bsx_1,\ldots,\bsx_N\}$ and $\cA=\{a_1,\ldots,a_N\}$ are as before. The worst-case error of such rules $Q_{\cP,\cA}$ for integration in $\cF_d$ is defined as $$e(Q_{\cP,\cA};\cF_d):=\sup_{\substack{f \in \cF_d\\ \|f\|_{\cF_d} \le 1}}|I(f)-Q_{\cP,\cA}(f)|.$$ Then we have the ``discrepancy-integration duality'' 
\begin{equation}\label{disc:int:dual}
  e(Q_{\cP,\cA};\cF_d)=D_N^{\Phi}(\overline{\cP},\cA),
\end{equation}
where $\overline{\cP}:=\{\boldsymbol{1}-\bsx : \bsx \in \cP\}$. For the initial error $e(0;\cF_d):= \sup_{\|f\|_{\cF_d}\le 1} |I(f)|$ we have $e(0;\cF_d)=\|v_d\|_{\Phi}=\disc_{\Phi}(0,d)$. The proof of the discrepancy-integration duality is given in the Appendix.
\end{remark}

We now return to the discrepancy problem. The normalized information complexity for $\varepsilon \in (0,1)$ and $d \in \mathbb{N}$ is
\begin{equation}\label{eq:complexity}
 N_{\Phi,\bullet}(\varepsilon,d)
 :=\min\left\{N\in \N:
 \disc_{\Phi,\bullet}(N,d)\leq\varepsilon \disc_{\Phi}(0,d)\right\},
\end{equation}
where $\bullet \in \{+,\qmc\}$. Since QMC weights are a subclass of nonnegative weights,
\[
N_{\Phi,\mathrm{qmc}}(\varepsilon,d) \geq N_{\Phi,+}(\varepsilon,d).
\]
Note that these quantities are well-defined, since there exist a finite $C_\infty>0$ such that $\|\cdot\|_{\Phi} \le C_{\infty} \|\cdot\|_{L_\infty}$ and QMC sequences of point sets of size $N \in \N$ with $L_\infty$-discrepancy tending to zero for increasing $N$.

The $L \log L$-Luxemburg with nonnegative weights or QMC weights, respectively, is said to suffer from the curse of dimensionality, if there are $C,\gamma>0$ and $\varepsilon_0 \in (0,1)$ such that $N_{\Phi,\bullet}(\varepsilon,d) \ge C(1+\gamma)^d$ for all $\varepsilon \in (0,\varepsilon_0)$ and for infinitely many $d \in \mathbb{N}$.  

We now state the main result. Its proof is given in Section~\ref{sec:proof}. 

\begin{theorem}\label{thm:main}
For every \(\varepsilon\in(0,1)\), there exist constants
\(c_{\varepsilon}>0\) and \(d_0=d_0(\varepsilon)\in\N\) such that
\begin{equation}\label{eq:main-finite}
 N_{\Phi,\mathrm{qmc}}(\varepsilon,d)
 \geq N_{\Phi,+}(\varepsilon,d)
 \geq \frac{c_{\varepsilon}}{d}
 \left(\frac{3+2 \sqrt{3}}{6}\right)^d
\end{equation}
for every \(d\geq d_0\).  In particular,
\begin{equation*}
 \liminf_{d\to\infty}
 N_{\Phi,+}(\varepsilon,d)^{1/d}
 \geq \frac{3+2 \sqrt{3}}{6}=1.07735\ldots.
\end{equation*}
Consequently (note that the factor $1/d$ in \eqref{eq:main-finite} is absorbed), the \(L\log L\)-Luxemburg discrepancy with nonnegative weights, and hence also the classical QMC weight discrepancy, suffers from the curse of dimensionality.
\end{theorem}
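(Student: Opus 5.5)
The plan is to reduce the Luxemburg discrepancy to a family of linear functionals of \(\Delta_{\cP,\cA}\). The \(1/d\) loss should come from the Orlicz duality constants. The exponential factor should come from an integration-type obstruction for nonnegative weights, similar to the one used for \(L_1\) in \cite{NP25,DS26}.

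\textbf{Step 1 (normalization).} By Proposition~\ref{prop:initial}, \(\disc_\Phi(0,d)\asymp d2^{-d}\). Suppose \(\|\Delta_{\cP,\cA}\|_\Phi\le \varepsilon\, \disc_\Phi(0,d)\). By the definition \eqref{def:assoc:norm} of the associate norm, for every bounded test function \(h\),
\[
\Bigl|\int_{[0,1]^d}\Delta_{\cP,\cA}h\dd\bst\Bigr|\le \|h\|_{\Phi,*}\,\|\Delta_{\cP,\cA}\|_\Phi .
\]
The associate space of \(L\log L\) is \(\exp L\). I will use this to bound \(\|h\|_{\Phi,*}\lesssim \|h\|_{L_1}\log\bigl(e+\|h\|_{L_\infty}/\|h\|_{L_1}\bigr)\), at least for product test functions \(h(\bst)=\prod_j\psi(t_j)\). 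When \(\|\psi\|_\infty/\|\psi\|_1\) is a fixed constant, the logarithm is of order \(d\). This is where the factor \(d\) is lost, and it has to be matched against the initial discrepancy \(d2^{-d}\). Note that \(\|v_d\|_{L_1}=2^{-d}\), so the Luxemburg normalization loses exactly one factor \(d\) relative to \(L_1\).

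\textbf{Step 2 (two test functionals).} I will apply Step~1 to two product test functions \(h_1,h_2\), with \(h_1\) roughly constant and \(h_2=\prod_j\psi(t_j)\) for a one-parameter family \(\psi\), for instance \(\psi(t)=\ind_{\{t>\alpha\}}\) or \(\psi(t)=t^{\beta}\). This gives two approximate identities:
\[
\sum_k a_k F_i(\bsx_k)=\int v_d h_i\dd\bst+O\bigl(\varepsilon\|h_i\|_{\Phi,*}\,d2^{-d}\bigr),\qquad i=1,2,
\]
where \(F_i(\bsx)=\prod_j\int_{x_j}^1\psi_i\). The first identity with \(h_1\equiv 1\) shows that \(\sum_k a_k\prod_j(1-x_{k,j})\) is close to \(2^{-d}\). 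With \(\varepsilon<1\), this forces the total mass \(\sum_k a_k\,\cdot\) (relevant weight) to be bounded below by \(c(1-\varepsilon)\) times the volume scale.

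\textbf{Step 3 (pigeonhole via nonnegativity).} Because \(a_k\ge0\), one obtains
\[
\sum_k a_kF_1(\bsx_k)\le N\max_{\bsx}\Bigl(\text{weighted contribution of one point}\Bigr).
\]
More precisely, I will use Cauchy--Schwarz \(\bigl(\sum_k a_kF_1(\bsx_k)\bigr)^2\le N\sum_k a_k^2F_1(\bsx_k)^2\). Then I will control \(\sum_k a_k^2F_1^2\) by a second functional of \(\Delta\) which is again bounded by the Luxemburg norm. Such a functional is obtained by integrating \(G_{\cP,\cA}^2\)-type quantities over regions where \(v_d\) is small, and nonnegativity makes \(G^2\ge\sum_k a_k^2\ind\). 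The result has the form \(N\ge c_\varepsilon d^{-1}\rho^d\), where \(\rho\) is a ratio of one-dimensional integrals depending on the parameter of \(\psi\).

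\textbf{Step 4 (optimization).} Finally I optimize over the parameter. The constant \(\tfrac12+\tfrac1{\sqrt3}=\tfrac{3+2\sqrt3}{6}\) suggests that the optimum involves \(\int_0^1t^2\dd t=1/3\) against \(\int_0^1t\dd t=1/2\) under a square root, so I expect a quadratic optimization in one variable. Taking \(d\ge d_0(\varepsilon)\) absorbs the error terms.

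\textbf{Main obstacle.} The hardest step is Step~3: producing a second functional that is controlled by the Luxemburg norm, and not merely by an \(L_2\)-norm, which is not available here, while still capturing \(\sum_k a_k^2F^2\). I would first try to bound \(\int_{\{G>2v_d\}}G\dd\bst\). For nonnegative weights this should be comparable to the mass near each point. Its Luxemburg estimate follows from \(\int|\Delta|\ind_E\le\|\ind_E\|_{\Phi,*}\|\Delta\|_\Phi\), with \(\|\ind_E\|_{\Phi,*}\approx|E|\log(1/|E|)\). This again costs only a factor \(d\).
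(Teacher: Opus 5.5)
Your plan has a genuine gap, and it starts in Steps~1--2. The associate-norm estimate is inverted. The quantity $|E|\log(1/|E|)$ is the size of the Luxemburg norm $\norm{\ind_E}_\Phi$ itself. Testing \eqref{def:assoc:norm} with $f=\ind_E/\norm{\ind_E}_\Phi$ gives $\norm{\ind_E}_{\Phi,*}\ge |E|/\norm{\ind_E}_\Phi\asymp 1/\log(1/|E|)$. So a H\"older bound $|\int\Delta_{\cP,\cA}h\dd\bst|\le\norm{h}_{\Phi,*}\norm{\Delta_{\cP,\cA}}_\Phi$ is at best polynomially, never exponentially, smaller than $\varepsilon\norm{v_d}_\Phi\asymp\varepsilon d2^{-d}$.

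With $h_1\equiv1$ one has $\norm{1}_{\Phi,*}=\kappa$. The error bound $\kappa\varepsilon\alpha_d2^{-d}\sim\kappa\varepsilon c_0d2^{-d}$ therefore exceeds the main term $\int v_d\dd\bst=2^{-d}$ once $d$ is large. Nothing forces $\sum_ka_k\prod_j(1-x_{k,j})$ to be close to $2^{-d}$; the "lost factor $d$" you mention kills this identity.

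Your product examples $\psi=\ind_{\{t>\alpha\}}$ and $\psi(t)=t^\beta$ fail the same way. The main term $(\int_0^1t\psi(t)\dd t)^d$ is exponentially smaller than $2^{-d}\norm{\psi}_\infty^d$. Meanwhile $\norm{h}_{\Phi,*}$ is only polynomially smaller than $\norm{\psi}_\infty^d$: restrict to the set where every $\psi(t_j)$ is within a factor $1-1/d$ of its maximum. A nonvacuous lower bound needs a test function close to the norming function of $v_d$. That function lives on the $\nu_d$-typical set $\{v_d\ge{\rm e}^{-(1/2+\delta)d}\}$ and is not a product. This is what Lemma~\ref{lem:localization} achieves, using the full modular rather than a single H\"older inequality.

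Second, two linear identities can never force $N$ to be large. By conic Carath\'eodory, any value of $(\sum_ka_kF_1(\bsx_k),\sum_ka_kF_2(\bsx_k))$ with $a_k\ge0$ is already attained using two of the boxes. Everything therefore rests on Step~3, which you leave open, and the mechanism you propose points the wrong way. The inequality $G^2\ge\sum_ka_k^2\ind_{\{\bsx_k<\bst\}}$ reduces an upper bound on $\sum_ka_k^2F_1(\bsx_k)^2$ to an upper bound on a weighted $\int G^2$. That is an $L_2$-type quantity which small $L\log L$ discrepancy does not control. Your fallback $\int_{\{G>2v_d\}}G\dd\bst$ is linear in $G$, and you give no link from it to $\sum_ka_k^2F_1^2$. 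As a result, neither the exponential rate nor the constant is actually derived.

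The missing idea is to put the nonlinearity on the lower-bound side. Take $Z=G_{\cP,\cA}/v_d$ under $\nu_d$.
\begin{itemize}
\item Localization gives $\E_{\nu_d}[Z^{1/2}]\ge b$.
\item Subadditivity of $z\mapsto z^{1/2}$ (this is where $a_k\ge0$ enters), the one-dimensional bound $\E_\mu[Y_x^{1/2}]\le R(\E_\mu[Y_x])^{1/2}$ for $\dd\mu=2t\dd t$, and Cauchy--Schwarz give $\E_{\nu_d}[Z^{1/2}]\le R^dN^{1/2}(\E_{\nu_d}[Z])^{1/2}$.
\end{itemize}
Only the \emph{linear} upper bound $\E_{\nu_d}[Z]\le1+\kappa\varepsilon\alpha_d$ is needed, and that is where the $1/d$ comes from. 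The constant is $R^{-2}=(4\sqrt3-6)^{-1}=\frac{3+2\sqrt3}{6}$, obtained by maximizing $\frac{2\sqrt2}{3}(1-x^{3/2})(1-x)^{-1/2}$, not by comparing $\int t^2$ with $\int t$.
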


The nonnegativity assumption is essential to the proof in Section~\ref{sec:proof}: it permits a positive-mixture representation of the normalized counting function and the subadditivity estimate used in Lemma~\ref{lem:mixture}. The corresponding problem for signed weights remains open.

The following consequence concerns numerical integration. Let $$e^+(N,d) :=\inf_{\substack{\bsx_1,\ldots,\bsx_N\in[0,1]^d\\a_1,\ldots,a_N\geq0}} e(Q_{\cP,\cA};\cF_d)$$ denote the minimal worst-case error of a linear rule $Q_{\cP,\cA}$ with nonnegative weights in the space $\cF_d$ from Remark~\ref{re:duality} and consider the normalized information complexity
$$N_{{\rm int},+}(\varepsilon,d) :=\min\{N\in\N: e^+(N,d)\le \varepsilon \, e(0; \cF_d)\} \qquad \mbox{for } \varepsilon \in (0,1),\ d \in \mathbb{N}.$$ The discrepancy-integration duality \eqref{disc:int:dual} implies $e^+(N,d)={\rm disc}_{\Phi,+}(N,d)$, $e(0;\cF_d)=\disc_{\Phi}(0,d)$ and $N_{{\rm int},+}(\varepsilon,d)=N_{\Phi,+}(\varepsilon,d)$. Thus, Theorem~\ref{thm:main} yields:  

\begin{corollary}
Integration in the space $\cF_d$ by means of linear rules with nonnegative weights suffers from the curse of dimensionality.
\end{corollary}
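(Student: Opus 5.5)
The plan is to bound $\disc_{\Phi,+}(N,d)$ from below by duality, testing $\Delta_{\cP,\cA}$ against a suitable product function, and to compare the result with $\disc_{\Phi}(0,d)\asymp d2^{-d}$ from Proposition~\ref{prop:initial}. The first inequality in \eqref{eq:main-finite} is immediate, since QMC weights are nonnegative, so only the bound for $N_{\Phi,+}$ needs proof.

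\emph{Step 1 (reduction via subadditivity).} Fix nonnegative weights and points. I would use the positive-mixture structure that the nonnegativity allows: $G_{\cP,\cA}=\sum_k a_k\ind_{[\bsx_k,\boldsymbol 1]}$ is a nonnegative combination of indicators of anchored boxes. The aim is to show that $\|\Delta_{\cP,\cA}\|_\Phi$ can be small only if one single box $[\bsx_k,\boldsymbol 1]$ correlates with $v_d$ about $1/N$ times as well as $v_d$ correlates with itself. For this I would invoke Lemma~\ref{lem:mixture}, which I expect to say the following: if $\|G-v_d\|_\Phi\le\varepsilon\|v_d\|_\Phi$, then for a suitable nonnegative test function $w$ one has
\[
(1-\varepsilon')\int v_d w\le \sum_k a_k\int_{[\bsx_k,\boldsymbol 1]}w ,
\]
together with a companion upper bound on $\sum_k a_k$ coming from testing with a second function. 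This mirrors the two-test-function argument of Novak--Pillichshammer for $L_1$, but uses the associate norm \eqref{def:assoc:norm} through $\bigl|\int \Delta w\bigr|\le \|\Delta\|_\Phi\|w\|_{\Phi,*}$ and the embedding Lemma~\ref{lem:L1-embedding}.

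\emph{Step 2 (product test functions and optimisation).} I would take $w(\bst)=\prod_j h(t_j)$ with a one-parameter family $h$, for example $h(t)=t^{\alpha}$ or the restriction of $h$ to a sub-interval. Then $\int v_d w=\bigl(\int_0^1 t\,h(t)\dd t\bigr)^d$ and
\[
\int_{[\bsx,\boldsymbol 1]}w=\prod_j\int_{x_j}^1 h(t)\dd t ,
\]
and pigeonholing over $k$ gives
\[
N\ge (1-\varepsilon')\,\frac{(\int t h)^d}{\bigl(\sum_k a_k\bigr)\max_{\bsx}\prod_j\int_{x_j}^1 h}.
\]
The remaining tasks are to control $\sum_k a_k$ by the second test function, to reduce the maximum over $\bsx$ to a one-dimensional maximisation, and to choose the parameter so that the per-coordinate ratio equals $\tfrac12+\tfrac1{\sqrt3}=(3+2\sqrt3)/6$. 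I expect the optimiser to arise from a quadratic condition, which would explain the $\sqrt3$.

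\emph{Step 3 (the associate norm and the factor $1/d$).} The norms $\|w\|_{\Phi,*}$ of the test functions must be compared with $\disc_\Phi(0,d)\asymp d2^{-d}$. Since $\Phi$ grows like $s\log s$, the associate norm behaves like an $\exp L$ norm, so a product function has associate norm roughly equal to its sup norm times a logarithmic factor that is polynomial in $d$. This loss should be exactly the factor $1/d$ in \eqref{eq:main-finite}, and it is absorbed into $c_\varepsilon/d$ once $d\ge d_0(\varepsilon)$. Taking $d$-th roots then gives the $\liminf$ statement, and the curse of dimensionality follows.

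\emph{Main obstacle.} The hardest step is Step 1. Because the Luxemburg norm is not an integral norm, I cannot simply use $\int\Delta$ as in the $L_1$ case. I would first try to get a clean one-sided inequality from the convexity of $\Phi$ and Jensen's inequality applied to the positive mixture, and only afterwards optimise the one-dimensional parameter in Step 2.
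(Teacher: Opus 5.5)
\textbf{Gap: the proposal never reaches the integration problem, and its substitute argument cannot produce an $N$-dependent bound.}

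Your proposal never addresses integration in $\cF_d$. It sets out to reprove the discrepancy bound \eqref{eq:main-finite} for $N_{\Phi,+}$, but it never relates $e^+(N,d)$, $e(0;\cF_d)$ or $N_{{\rm int},+}(\varepsilon,d)$ to the discrepancy. So even a completed Steps 1--3 would not prove the corollary.

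The missing step, which is the paper's entire argument, is the discrepancy--integration duality \eqref{disc:int:dual} of Remark~\ref{re:duality}: $e(Q_{\cP,\cA};\cF_d)=D_N^{\Phi}(\overline{\cP},\cA)$ and $e(0;\cF_d)=\norm{v_d}_{\Phi}=\disc_\Phi(0,d)$. This is proved with the Orlicz--H\"older inequality, the norming function $g_f$ of Lemma~\ref{le:OrHo} applied to the upper anchored discrepancy $\nabla_{\cP,\cA}$, and the reflection $\bst\mapsto\boldsymbol 1-\bst$. Since $\cP\mapsto\overline{\cP}$ is a bijection of $N$-point sets that leaves the weights unchanged, taking infima gives $e^+(N,d)=\disc_{\Phi,+}(N,d)$. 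Hence $N_{{\rm int},+}(\varepsilon,d)=N_{\Phi,+}(\varepsilon,d)$, and Theorem~\ref{thm:main}, which you may simply quote, gives the curse.

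The substitute proof of the theorem you sketch would also fail. Lemma~\ref{lem:mixture} is not a test-function inequality. It is the fractional-moment bound $\E_{\nu_d}[Z^{1/2}]\le R^dN^{1/2}(\E_{\nu_d}[Z])^{1/2}$, and the factor $N^{1/2}$ comes from the \emph{nonlinear} subadditivity \eqref{eq:subadditivity} of $z\mapsto z^{1/2}$.

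A scheme built only from linear functionals $\int\Delta_{\cP,\cA}w$ cannot produce dependence on $N$. From $\sum_k a_kc(\bsx_k)\ge A$ with $c(\bsx)=\int_{[\bsx,\boldsymbol 1]}w$ you only get $(\sum_k a_k)\max_k c(\bsx_k)\ge A$, so your displayed pigeonhole bound for $N$ does not follow. More fundamentally, by Carath\'eodory's theorem, any values of $m$ such functionals attained with nonnegative weights are also attained with at most $m$ nodes. For your two constraints, a single atom at a node maximizing $c_1/c_2$ already works. Configurations with small discrepancy exist for large $N$, so the constraints are satisfiable, and a fixed number of test functions can never force $N$ to grow with $d$.

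Your Step~1 is also left unresolved even on its own terms. Finally, the factor $1/d$ in \eqref{eq:main-finite} does not come from an associate-norm loss. It comes from $\E_{\nu_d}[Z]\le1+\kappa\varepsilon\alpha_d$ in \eqref{eq:EZ-upper} together with $\alpha_d\sim c_0d$.
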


\section{Luxemburg-norm preliminaries and the initial discrepancy}

For a measurable function \(f\) on \([0,1]^d\), write for short $\rho_\Phi(f):=\int_{[0,1]^d}\Phi(|f(\boldsymbol t)|)\dd\boldsymbol t$.

\begin{lemma}\label{lem:modular}
Let \(f\) be a bounded measurable function not a.e. zero.  Then the function
\[
 K \mapsto \rho_\Phi(f/K)\qquad \mbox{for } K>0,
\]
is continuous and strictly decreasing, tends to \(+\infty\) as \(K \downarrow0\), and tends to \(0\) as \(K\to\infty\).  Consequently,
\begin{equation*}
 \rho_\Phi\!\left(\frac{f}{\norm{f}_{\Phi}}\right)=1.
\end{equation*}
Moreover, if \(K \geq\norm{f}_{\Phi}\), then $\rho_\Phi(f/K)\leq 1$.
\end{lemma}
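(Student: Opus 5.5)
Since $f$ is bounded, say $|f|\le M$, and not a.e. zero, the set $E:=\{\boldsymbol t: f(\boldsymbol t)\neq 0\}$ has positive measure. The plan is to study $F(K):=\rho_\Phi(f/K)=\int_{[0,1]^d}\Phi(|f(\boldsymbol t)|/K)\dd\boldsymbol t$ through elementary properties of $\Phi(s)=s\log(1+s)$: $\Phi$ is continuous on $[0,\infty)$, $\Phi(0)=0$, and $\Phi'(s)=\log(1+s)+s/(1+s)>0$ for $s>0$. Hence $\Phi$ is strictly increasing, and $\Phi(s)\to\infty$ as $s\to\infty$.

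\emph{Continuity.} Fix $K_0>0$ and consider $K\in[K_0/2,2K_0]$. Pointwise, $\Phi(|f|/K)\to\Phi(|f|/K_0)$ as $K\to K_0$, and the integrand is dominated by the constant $\Phi(2M/K_0)$, which is integrable on the unit cube. Dominated convergence therefore gives continuity of $F$ at $K_0$.

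\emph{Strict monotonicity.} If $K_1<K_2$, then $\Phi(|f|/K_1)\ge\Phi(|f|/K_2)$ everywhere, with strict inequality on $E$. Since $|E|>0$, integrating the strictly positive difference over $E$ gives $F(K_1)>F(K_2)$.

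\emph{Limits.} As $K\to\infty$ the integrand is at most $\Phi(M/K)\to0$ uniformly, so $F(K)\to0$. As $K\downarrow0$, $\Phi(|f|/K)\uparrow\infty$ pointwise on $E$, and monotone convergence together with $|E|>0$ gives $F(K)\to\infty$. A more concrete route is to pick $\delta>0$ with $|\{|f|\ge\delta\}|=:m>0$; then $F(K)\ge m\,\Phi(\delta/K)\to\infty$.

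\emph{Conclusion.} By the intermediate value theorem and strict monotonicity, there is a unique $K^*>0$ with $F(K^*)=1$. The set $\{K>0:F(K)\le1\}$ equals $[K^*,\infty)$, because $F(K)>1$ for $K<K^*$ and $F(K)\le1$ for $K\ge K^*$. Its infimum is $K^*$, so $\norm{f}_\Phi=K^*$ and $\rho_\Phi(f/\norm{f}_\Phi)=1$. The final claim follows from monotonicity: for $K\ge\norm{f}_\Phi$ we get $F(K)\le F(\norm{f}_\Phi)=1$.

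The only mildly delicate point is justifying the limit interchanges. Boundedness of $f$ handles continuity and the limit $K\to\infty$, while positivity of $|E|$ handles strictness and the blow-up as $K\downarrow0$. No step is a genuine obstacle.
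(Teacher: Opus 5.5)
Your proof is correct and follows essentially the same route as the paper: dominated convergence (using boundedness of $f$) gives continuity and the limit $K\to\infty$, strict monotonicity of $\Phi$ on the positive-measure set $\{f\neq 0\}$ gives strict decrease, the bound $\rho_\Phi(f/K)\ge m\,\Phi(c/K)$ handles $K\downarrow 0$, and the intermediate value theorem then identifies the unique root with $\norm{f}_\Phi$. Your explicit identification $\{K>0:\rho_\Phi(f/K)\le 1\}=[K^*,\infty)$ simply spells out the step the paper summarizes as ``by the definition of the Luxemburg norm.''
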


\begin{proof}
The function \(\Phi\) is continuous and strictly increasing on \([0,\infty)\).  Hence, for every point at which \(|f|>0\), the quantity
\(\Phi(|f|/K)\) is strictly decreasing in \(K\).  Since \(f\) is bounded, dominated convergence shows that \(K\mapsto\rho_\Phi(f/K)\) is continuous on every compact subinterval of \((0,\infty)\).  The limit as \(K\to\infty\) is zero by dominated convergence.  Since \(f\) is not zero almost everywhere, there are numbers \(c>0\) and \(m>0\) such that the set \(\{|f|\geq c\}\) has measure at least \(m\).  Therefore
\[
 \rho_\Phi(f/K)\geq m\Phi(c/K)\rightarrow\infty
 \qquad \mbox{as } K\downarrow 0.
\]
Thus there is a unique \(K_0>0\) for which \(\rho_\Phi(f/K_0)=1\).  By the definition of the Luxemburg norm, this \(K_0\) equals \(\norm{f}_{\Phi}\).  The last assertion follows from monotonicity.
\end{proof}

Let \(\kappa>0\) be the unique solution of $\Phi(\kappa)=\kappa\log(1+\kappa)=1$.
Numerically, \(\kappa=1.239977887\ldots\).

\begin{lemma}[Embedding into \(L_1\)]\label{lem:L1-embedding}
For every measurable \(f\) on \([0,1]^d\) we have $\norm{f}_{L_1}\leq\kappa\norm{f}_{\Phi}$.
\end{lemma}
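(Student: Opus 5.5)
Set $K:=\norm{f}_{\Phi}$. If $K=\infty$ there is nothing to prove, and if $K=0$ then $f=0$ almost everywhere, since for every $K'>0$ we have $\int\Phi(|f|/K')\le 1$, which forces $|f|<\infty$ a.e. and in fact $f=0$ a.e. by letting $K'\downarrow 0$ (Fatou, with $\Phi(|f|/K')\to\infty$ wherever $f\ne0$). So we may assume $0<K<\infty$. Fix any $K'>K$. By definition of the infimum and monotonicity of $\Phi$, $\rho_\Phi(f/K')\le 1$. It then suffices to show $\int|f|/K'\le\kappa$, because letting $K'\downarrow K$ gives $\norm{f}_{L_1}\le\kappa K$. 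Boundedness of $f$ is not assumed, so I avoid \Cref{lem:modular} and work with $K'$ directly.

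The key step is a pointwise inequality $s\le \kappa\,\Phi(s)$ for $s\ge\kappa$, together with $s\le\kappa$ trivially for $s\le\kappa$. This combines into
\[
 s\le \kappa\max\{1,\Phi(s)\}\le \kappa\bigl(\lambda+(1-\lambda)\cdots\bigr),
\]
but a plain maximum is not integrable to the right bound. So I will use convexity instead. $\Phi$ is convex, since $\Phi''(s)=\frac{1}{1+s}+\frac{1}{(1+s)^2}>0$, and $\Phi(0)=0$. Apply Jensen on the probability space $[0,1]^d$ with $g=|f|/K'$:
\[
 \Phi\Bigl(\int g\Bigr)\le\int\Phi(g)\le 1=\Phi(\kappa).
\]
Since $\Phi$ is strictly increasing, $\int g\le\kappa$, that is, $\norm{f}_{L_1}\le\kappa K'$. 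If $\int g=\infty$, Jensen still applies in the extended sense, because $\Phi(s)\ge s\log 2$ for $s\ge1$ forces $\int\Phi(g)=\infty$, a contradiction. So $g$ is integrable.

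The only delicate point is this justification of Jensen for possibly non-integrable $f$, and the linear lower bound on $\Phi$ handles it. The rest is the limit $K'\downarrow K$.
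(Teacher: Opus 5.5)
Your proof is correct and follows the paper's argument essentially verbatim. Take $K'>\norm{f}_{\Phi}$, get $\rho_\Phi(f/K')\le 1$ by monotonicity, apply Jensen to obtain $\Phi(\norm{f}_{L_1}/K')\le 1=\Phi(\kappa)$, and let $K'\downarrow\norm{f}_{\Phi}$. Your explicit integrability check via $\Phi(s)\ge s\log 2$ for $s\ge1$ usefully spells out what the paper leaves implicit, but you can drop two things: the abandoned detour through $s\le\kappa\max\{1,\Phi(s)\}$, and the separate treatment of $\norm{f}_{\Phi}=0$, since the limiting argument already covers that case.
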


\begin{proof}
The assertion is trivial if \(f=0\) almost everywhere or \(\norm{f}_{\Phi}=\infty\). Otherwise, let \(K>\norm{f}_{\Phi}\). By the definition of the infimum, there exists \(K_0<K\) such that \(\rho_\Phi(f/K_0)\leq1\); hence monotonicity gives
\[
 \int_{[0,1]^d}\Phi\!\left(\frac{|f(\bst)|}{K}\right) \dd \bst\leq1.
\]
Since the unit cube has Lebesgue measure one and \(\Phi\) is convex, Jensen's inequality gives
\[
 \Phi\!\left(\frac{\norm{f}_{L_1}}{K}\right)
 \leq\int_{[0,1]^d}\Phi\!\left(\frac{|f(\bst)|}{K}\right) \dd \bst
 \leq 1=\Phi(\kappa).
\]
The function \(\Phi\) is increasing, so \(\norm{f}_{L_1}\leq\kappa K\).  Letting \(K\downarrow\norm{f}_{\Phi}\) proves Lemma~\ref{lem:L1-embedding}.
\end{proof}

\begin{remark}\label{re:1}\rm
For every $p>1$ there exists a $C_p>0$ with the following property: for every measurable \(f\) on \([0,1]^d\) we have $\norm{f}_{\Phi}\leq C_p \norm{f}_{L_p}$. The proof is given in the Appendix.
\end{remark}

We now analyze the initial discrepancy. Note that $\norm{v_d}_{L_1}=\int_{[0,1]^d}v_d(\boldsymbol t)\dd\boldsymbol t=2^{-d}$. Put
\begin{equation}\label{eq:md-ad}
\alpha_d:=2^d \norm{v_d}_{\Phi}.
\end{equation}
Define the volume-biased probability measure \(\nu_d\) by
\begin{equation}\label{eq:nud}
 \dd\nu_d(\boldsymbol t)
 :=2^dv_d(\boldsymbol t)\dd\boldsymbol t.
\end{equation}
This is the product measure
\begin{equation}\label{eq:productmeasure}
 \nu_d=\mu^{\otimes d}
 \qquad \mbox{where } \dd\mu(t)=2t\dd t.
\end{equation}

\begin{proposition}[Initial discrepancy]\label{prop:initial}
The initial $L \log L$-Luxemburg discrepancy is
\begin{equation*}
\disc_{\Phi}(0,d)= \norm{v_d}_{\Phi} =\left(c_0+o(1)\right) \frac{d}{2^d},\qquad \mbox{where } c_0:=\log2-\frac12.
\end{equation*}
\end{proposition}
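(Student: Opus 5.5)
The plan is to normalize by the $L_1$-norm and reduce the modular equation of Lemma~\ref{lem:modular} to a fixed-point equation for $\alpha_d$ from \eqref{eq:md-ad}. Then I would evaluate that equation with the volume-biased measure $\nu_d$ of \eqref{eq:nud}. Put $u(\bst):=2^d v_d(\bst)$, so that $\dd\nu_d=u\dd\bst$, and write $K=\alpha 2^{-d}$. Then
\[
\rho_\Phi(v_d/K)=\int_{[0,1]^d}\frac{u}{\alpha}\log\Bigl(1+\frac{u}{\alpha}\Bigr)\dd\bst=\frac1\alpha\,F(\alpha),\qquad F(\alpha):=\int\log\Bigl(1+\frac{u}{\alpha}\Bigr)\dd\nu_d .
\]
By Lemma~\ref{lem:modular}, $K=\norm{v_d}_\Phi$ is the unique solution of $\rho_\Phi(v_d/K)=1$. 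Hence $\alpha_d$ is the unique positive solution of $\alpha=F(\alpha)$. Note that $F$ is decreasing in $\alpha$.

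The key probabilistic input is the product structure \eqref{eq:productmeasure}. Under $\nu_d$,
\[
\log u=d\log 2+\sum_{j=1}^d\log t_j
\]
is a sum of i.i.d.\ variables. For each of them, $\int_0^1 2t\log t\dd t=-\tfrac12$ and $\sigma^2:=\mathrm{Var}_\mu(\log t)<\infty$. Therefore $\E_{\nu_d}[\log u]=d(\log2-\tfrac12)=c_0d$ and $\mathrm{Var}_{\nu_d}(\log u)=d\sigma^2$. Since $c_0>0$, Chebyshev's inequality gives $\nu_d(\log u<0)\le \sigma^2/(c_0^2 d)$. Cauchy--Schwarz then yields
\[
\E_{\nu_d}[(\log u)^-]\le \nu_d(\log u<0)^{1/2}\,\E_{\nu_d}[(\log u)^2]^{1/2}=O(d^{-1/2})\cdot O(d)=O(\sqrt d)=o(d).
\]

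Next I would sandwich $F$ using two elementary bounds on $\log(1+x)$.

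\emph{Lower bound.} Since $\log(1+x)\ge\log x$, we get $F(\alpha)\ge \E_{\nu_d}[\log u]-\log\alpha=c_0d-\log\alpha$ for every $\alpha>0$. At the fixed point this gives $\alpha_d+\log\alpha_d\ge c_0 d$. Hence $\alpha_d\to\infty$, and then $\alpha_d\ge c_0d-O(\log d)$.

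\emph{Upper bound.} For $\alpha\ge1$ we have $\log(1+u/\alpha)\le\log(1+u)\le (\log u)^++\log 2$. Moreover $\E[(\log u)^+]=c_0d+\E[(\log u)^-]$. This gives $F(\alpha)\le c_0d+o(d)$. Since $\alpha_d\ge1$ for large $d$ by the previous step, we obtain $\alpha_d=F(\alpha_d)\le c_0d+o(d)$.

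Combining the two bounds gives $\alpha_d=(c_0+o(1))d$, which is the claim $\norm{v_d}_\Phi=(c_0+o(1))d\,2^{-d}$. The only delicate point is the control of the negative part of $\log u$ near $t_j\approx0$, that is, uniform integrability of the error term. The variance bound above handles it: $\log t$ has all moments under $\dd\mu=2t\dd t$. If a sharper error term were wanted, I would replace Chebyshev's inequality by a Cramér-type exponential bound.
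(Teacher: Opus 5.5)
Your proof is correct and uses the same mechanism as the paper's: the fixed-point equation $\alpha_d=\E_{\nu_d}[\log(1+2^dv_d/\alpha_d)]$, concentration of $\log v_d=\sum_j\log t_j$ under $\nu_d$ (mean $-d/2$, variance $d/4$), and the sandwich $x_+\le\log(1+{\rm e}^x)\le x_++\log2$. The only real difference is how you handle $\alpha_d$ inside the logarithm. The paper first proves the a priori bounds $\kappa^{-1}\le\alpha_d\le d\log\frac43+\log(1+\kappa)$, via Lemma~\ref{lem:L1-embedding} and Jensen, so that $\log\alpha_d=o(d)$ can be absorbed in a two-sided Lipschitz estimate. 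You instead get $\alpha_d+\log\alpha_d\ge c_0d$ directly from $\log(1+x)\ge\log x$, and you remove $\alpha_d$ from the upper bound using the monotonicity of $F$. This makes your argument self-contained and avoids the embedding lemma, at the cost of treating the lower and upper bounds separately.
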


\begin{proof}
We prove the equivalent assertion 
\begin{equation}\label{eq:ad-asymptotic}
 \lim_{d\to\infty}\frac{\alpha_d}{d}=c_0.
\end{equation}

By Lemma~\ref{lem:modular} with $f(\bst)=v_d(\bst)$, the defining modular equals one at \(K =\norm{v_d}_{\Phi}\), i.e., 
\[
 1=\int_{[0,1]^d}
 \frac{v_d(\boldsymbol t)}{\norm{v_d}_{\Phi}}
 \log\!\left(1+\frac{v_d(\boldsymbol t)}{\norm{v_d}_{\Phi}}\right)
 \dd\boldsymbol t.
\]
Using \eqref{eq:nud} and \(\norm{v_d}_{\Phi}=\alpha_d 2^{-d}\), we obtain
\begin{equation}\label{eq:ad-equation}
 \alpha_d
 =\E_{\nu_d}\left[
 \log\!\left(1+\frac{2^d \, v_d(\boldsymbol T)}{\alpha_d}\right)\right],
\end{equation}
where \(\boldsymbol T=(T_1,\ldots,T_d)\) has distribution \(\nu_d\).

We first establish crude bounds on \(\alpha_d\).  By Lemma~\ref{lem:L1-embedding}, $2^{-d}=\norm{v_d}_{L_1}\leq\kappa \, \norm{v_d}_{\Phi}$, and hence
\begin{equation}\label{eq:ad-lower}
 \alpha_d\geq\kappa^{-1}.
\end{equation}
On the other hand, the concavity of \(x\mapsto\log(1+x)\), followed by Jensen's inequality in \eqref{eq:ad-equation}, gives
\begin{align*}
 \alpha_d
 &\leq\log\!\left(1+
 \frac{2^d \, \E_{\nu_d}[v_d(\boldsymbol T)]}{\alpha_d}\right).
\end{align*}
A direct product calculation yields
\[
 \E_{\nu_d}[v_d(\boldsymbol T)]
 =2^d\int_{[0,1]^d}v_d(\boldsymbol t)^2\dd\boldsymbol t
 =\left(\frac23\right)^d.
\]
Therefore,
\begin{equation}\label{eq:ad-upper}
 \alpha_d
 \leq\log\!\left(1+\frac{(4/3)^d}{\alpha_d}\right)
 \leq\log\!\left(1+\kappa \left(\tfrac43\right)^d\right)
 \leq d\log\tfrac43+\log(1+\kappa).
\end{equation}
It follows from \eqref{eq:ad-lower} and \eqref{eq:ad-upper} that
\begin{equation}\label{eq:logad}
\lim_{d \rightarrow \infty} \frac{\log \alpha_d}{d} = 0.
\end{equation}

Under \(\mu\) from \eqref{eq:productmeasure}, the random variable \(Y:=-\log T\) is exponentially distributed with rate two.  Indeed, for \(y\geq0\),
\[
 \mu(Y>y)=\mu(T< {\rm e}^{-y})=\int_0^{{\rm e}^{-y}}2t\dd t={\rm e}^{-2y}.
\]
Consequently, if
\begin{equation}\label{eq:Sd}
 S_d:=-\log v_d(\boldsymbol T)=\sum_{j=1}^d(-\log T_j),
\end{equation}
then
\begin{equation}\label{eq:Sd-moments}
 \E_{\nu_d}[S_d]=\frac d2,
 \qquad
 \operatorname{Var}_{\nu_d}[S_d]=\frac d4.
\end{equation}
We rewrite Eq.~\eqref{eq:ad-equation} as
\begin{equation*}
 \alpha_d  =\E_{\nu_d}\left[\ell\!\left(d\log2-S_d-\log \alpha_d\right)\right]
 \qquad \mbox{where }  \ell(x):=\log(1+{\rm e}^x).
\end{equation*}
Dividing by \(d\) and using the elementary estimate $x_+\leq\ell(x)\leq x_+ +\log2$ for every \(x\in\R\), where $x_+:=\max\{x,0\}$, we obtain
\begin{equation}\label{eq:ad-positive-part}
 \left|
 \frac{\alpha_d}{d}
 -\E_{\nu_d}\left[\left(
 \log2-\frac{S_d}{d}-\frac{\log \alpha_d}{d}
 \right)_+\right]
 \right|
 \leq\frac{\log2}{d}.
\end{equation}
The map \(x\mapsto x_+\) is 1-Lipschitz.  Hence, by \eqref{eq:Sd-moments} and Cauchy-Schwarz,
\begin{align*}
 \left|
 \E_{\nu_d}\left[\left(
 \log2-\frac{S_d}{d}-\frac{\log \alpha_d}{d}
 \right)_+\right]-c_0
 \right|
 &\leq
 \E_{\nu_d}\left[\left|\frac{S_d}{d}-\frac12\right|\right]
 +\frac{|\log \alpha_d|}{d}\\
 &\leq
 \sqrt{\operatorname{Var}_{\nu_d}\left[\frac{S_d}{d}\right]}
 +\frac{|\log \alpha_d|}{d}
 =\frac{1}{2\sqrt d}+\frac{|\log \alpha_d|}{d}.
\end{align*}
The right-hand side tends to zero by \eqref{eq:logad}.  Combining this with \eqref{eq:ad-positive-part} proves \eqref{eq:ad-asymptotic}, and the final result follows from \(\norm{v_d}_{\Phi}=\alpha_d2^{-d}\).
\end{proof}

\section{The proof of Theorem~\ref{thm:main}}\label{sec:proof}

Fix nodes $\cP$ and nonnegative weights $\cA$, and let \(G_{\cP,\cA}\) be given by \eqref{eq:Gintro}.  On the set on which \(v_d>0\), define
\begin{equation}\label{eq:Z}
 Z(\boldsymbol t):=\frac{G_{\cP,\cA}(\boldsymbol t)}{v_d(\boldsymbol t)}
 \qquad \mbox{ and }\qquad  X(\boldsymbol t):=|Z(\boldsymbol t)-1|.
\end{equation}
On the union of the coordinate hyperplanes, where \(v_d=0\), set \(Z:=0\) and \(X:=1\). This exceptional set has \(\nu_d\)-measure zero. Since $|\Delta_{\cP,\cA}|=|G_{\cP,\cA}-v_d|=v_dX$ almost everywhere, the volume-biased measure $\nu_d$ is adapted to the relative discrepancy \(X\).

The proof of the main result has two principal steps. First, small normalized discrepancy implies a dimension-independent lower bound for \(\mathbb E_{\nu_d}[Z^{1/2}]\) on volume-typical boxes (Lemma~\ref{lem:localization}). Second, each anchored-box atom loses a fixed factor per coordinate in this fractional moment, while a positive mixture of \(N\) atoms gains at most a factor \(N^{1/2}\) (Lemmas~\ref{lem:one-dimensional} and \ref{lem:mixture}). Comparing the bounds yields the exponential lower bound on \(N\). The proof is based on methods from \cite{DS26}, but requires a higher level of technical detail due to the more complicated norm.

\begin{lemma}\label{lem:modular-estimate}
Suppose that
\begin{equation}\label{eq:small-norm}
 \norm{\Delta_{\cP,\cA}}_{\Phi}\leq\varepsilon \norm{v_d}_{\Phi} \qquad \mbox{for some } \varepsilon \in (0,1).
\end{equation}
Then
\begin{equation}\label{eq:modular-X}
 \E_{\nu_d}\left[
 X\log\!\left(1+\frac{v_d}{\varepsilon \, \norm{v_d}_{\Phi}}X\right)
 \right]
 \leq\varepsilon \alpha_d \qquad \mbox{and} \qquad \E_{\nu_d}[X]\leq\kappa\varepsilon \alpha_d.
\end{equation}
Consequently,
\begin{equation}\label{eq:EZ-upper}
 \E_{\nu_d}[Z]\leq1+\kappa\varepsilon \alpha_d.
\end{equation}
\end{lemma}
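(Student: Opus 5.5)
Set $K:=\varepsilon\norm{v_d}_{\Phi}=\varepsilon\alpha_d2^{-d}$ and use the identity $|\Delta_{\cP,\cA}|=v_dX$, which holds almost everywhere. The hypothesis \eqref{eq:small-norm} says $\norm{\Delta_{\cP,\cA}}_\Phi\le K$, so the modular at scale $K$ is at most one. If $\Delta_{\cP,\cA}=0$ a.e. this is trivial. Otherwise $\Delta_{\cP,\cA}$ is bounded (by $\sum_k a_k+1$), and the last assertion of Lemma~\ref{lem:modular} gives $\rho_\Phi(\Delta_{\cP,\cA}/K)\le 1$. Written out, this is
\[
\int_{[0,1]^d}\frac{v_d(\bst)X(\bst)}{K}\log\!\left(1+\frac{v_d(\bst)X(\bst)}{K}\right)\dd\bst\le 1 .
\]
Now convert Lebesgue measure to $\nu_d$ via \eqref{eq:nud}, namely $v_d\dd\bst=2^{-d}\dd\nu_d$. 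The left-hand side becomes $\frac{2^{-d}}{K}\E_{\nu_d}[X\log(1+v_dX/K)]$. Multiplying by $K2^d=\varepsilon\alpha_d$ gives exactly the first inequality in \eqref{eq:modular-X}. The exceptional hyperplane set has $\nu_d$-measure zero and Lebesgue measure zero, so it plays no role.

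For the second inequality, apply Lemma~\ref{lem:L1-embedding} to $\Delta_{\cP,\cA}$. This gives $\int v_dX\dd\bst=\norm{\Delta_{\cP,\cA}}_{L_1}\le\kappa\norm{\Delta_{\cP,\cA}}_\Phi\le\kappa\varepsilon\norm{v_d}_\Phi$. The left side equals $2^{-d}\E_{\nu_d}[X]$ and $\norm{v_d}_\Phi=\alpha_d2^{-d}$, so $\E_{\nu_d}[X]\le\kappa\varepsilon\alpha_d$.

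Finally, $Z\le 1+|Z-1|=1+X$ pointwise, including on the exceptional set where $Z=0$ and $X=1$. Taking $\nu_d$-expectations yields \eqref{eq:EZ-upper}.

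The main point of care is the measure change with the correct normalization factor $2^d$. It is also worth noting that the first bound, although stated, is not needed for the second; the $L_1$ embedding suffices.
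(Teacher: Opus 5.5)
Your proof is correct and follows essentially the same route as the paper: you apply the modular bound from Lemma~\ref{lem:modular} at scale $\varepsilon\norm{v_d}_{\Phi}$, change measure via $v_d\dd\bst=2^{-d}\dd\nu_d$, bound $\E_{\nu_d}[X]$ using Lemma~\ref{lem:L1-embedding}, and finish with $Z\le 1+X$. Your explicit check that $\Delta_{\cP,\cA}$ is bounded, which is needed for Lemma~\ref{lem:modular} to apply, is a small detail the paper leaves implicit.
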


\begin{proof}
The case $\Delta_{\cP,\cA}=0$ a.e. is trivial. If $\Delta_{\cP,\cA}$ is not a.e. zero, then Lemma~\ref{lem:modular} and \eqref{eq:small-norm} give
\[
 \int_{[0,1]^d}
 \Phi\!\left(\frac{|\Delta_{\cP,\cA}(\bst)|}{\varepsilon \norm{v_d}_{\Phi}}\right)
 \dd\boldsymbol t\leq1.
\]
Since \(|\Delta_{\cP,\cA}|=v_dX\), the definition of \(\Phi\), the identity
\(\norm{v_d}_{\Phi}=\alpha_d2^{-d}\), and the change of measure \eqref{eq:nud} give
\begin{align*}
 1
 &\geq\int_{[0,1]^d}
 \frac{v_d(\bst)X(\bst)}{\varepsilon \norm{v_d}_{\Phi}}
 \log\!\left(1+\frac{v_d(\bst)X(\bst)}{\varepsilon \norm{v_d}_{\Phi}}\right)
 \dd\boldsymbol t=\frac{1}{\varepsilon \alpha_d}
 \E_{\nu_d}\left[
 X\log\!\left(1+\frac{v_d}{\varepsilon \norm{v_d}_{\Phi}}X\right)
 \right].
\end{align*}
This implies the first estimate in \eqref{eq:modular-X}. Furthermore, Lemma~\ref{lem:L1-embedding} and \eqref{eq:small-norm} imply $\norm{\Delta_{\cP,\cA}}_{L_1}
 \leq\kappa\varepsilon \norm{v_d}_{\Phi}$. Using \(\dd\nu_d=(v_d/2^{-d})\dd\boldsymbol t\), we obtain
\[
 \E_{\nu_d}[X]
 =\frac{1}{2^{-d}}\int_{[0,1]^d}v_d(\bst)X(\bst)\dd\boldsymbol t
 =\frac{\norm{\Delta_{\cP,\cA}}_{L_1}}{2^{-d}}
 \leq\kappa\varepsilon\frac{\norm{v_d}_{\Phi}}{2^{-d}}
 =\kappa\varepsilon \alpha_d,
\]
which is the second estimate in \eqref{eq:modular-X}. Finally, \(Z\leq1+|Z-1|=1+X\), which proves~\eqref{eq:EZ-upper}.
\end{proof}

Next, we localize to boxes whose volume is typical under \(\nu_d\).  Recall \(c_0=\log2-\frac12\).

\begin{lemma}\label{lem:localization}
Fix \(\varepsilon\in(0,1)\).  Choose numbers \(\delta\) and \(\tau\) satisfying
\begin{equation}\label{eq:delta-tau}
 0<\delta<c_0(1-\varepsilon) \qquad \mbox{ and } \qquad 
 0<\tau<1-\frac{\varepsilon c_0}{c_0-\delta}.
\end{equation}
Define
\begin{equation*}
 I_\delta:=2\delta-\log(1+2\delta)>0
\qquad \mbox{ and } \qquad 
 L_d:=\log\!\left(
 1+\frac{\tau {\rm e}^{(c_0-\delta)d}}{\varepsilon \alpha_d}
 \right).
\end{equation*}
If \eqref{eq:small-norm} holds, then
\begin{equation}\label{eq:moment-lower-explicit}
 \E_{\nu_d}[Z^{1/2}]
 \geq
 1-{\rm e}^{-I_\delta d}-\tau-\frac{\varepsilon \alpha_d}{L_d}.
\end{equation}
In particular, there exist \(b=b(\varepsilon,\delta,\tau)>0\) and
\(d_1=d_1(\varepsilon,\delta,\tau)\) such that
\begin{equation}\label{eq:moment-lower}
 \E_{\nu_d}[Z^{1/2}]\geq b \qquad \mbox{for all } d\geq d_1.
\end{equation}
The constants \(b\) and \(d_1\) are independent of the particular nodes and weights.
\end{lemma}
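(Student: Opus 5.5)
The plan is to compare $Z^{1/2}$ pointwise with $1-\min\{X,1\}$, and then bound $\E_{\nu_d}[\min\{X,1\}]$ by splitting $[0,1]^d$ into three regions: atypically small volume, small relative discrepancy, and large relative discrepancy on typical boxes.

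\emph{Pointwise bound.} Since $Z\ge 0$, we have $Z^{1/2}\ge \min\{Z,1\}$. If $Z<1$, then $\min\{Z,1\}=Z=1-X$; if $Z\ge1$, then $\min\{Z,1\}=1\ge 1-X$. In both cases $\min\{Z,1\}\ge 1-\min\{X,1\}$, so
\[
\E_{\nu_d}[Z^{1/2}]\ge 1-\E_{\nu_d}[\min\{X,1\}].
\]
Let $A:=\{S_d\le(\tfrac12+\delta)d\}$, with $S_d$ from \eqref{eq:Sd}. On $A$ we have $2^d v_d=\exp(d\log 2-S_d)\ge {\rm e}^{(c_0-\delta)d}$. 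Split
\[
\E_{\nu_d}[\min\{X,1\}]\le \nu_d(A^c)+\tau+\E_{\nu_d}\bigl[X\,\ind_{A\cap\{X\ge\tau\}}\bigr],
\]
using $\min\{X,1\}\le 1$ on $A^c$ and $\min\{X,1\}\le X<\tau$ on $A\cap\{X<\tau\}$.

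\emph{Atypical volumes.} Under $\nu_d$, $S_d$ is a sum of $d$ i.i.d.\ exponential variables with rate $2$, as shown in the proof of Proposition~\ref{prop:initial}. A Chernoff bound with $\E[{\rm e}^{\lambda Y}]=2/(2-\lambda)$, optimized at $\lambda=2-1/(\tfrac12+\delta)$, gives
\[
\nu_d(A^c)\le \exp\bigl(-d\,[\,2x-1-\log(2x)\,]\bigr),\qquad x=\tfrac12+\delta .
\]
The exponent equals $2\delta-\log(1+2\delta)=I_\delta$. This yields the term ${\rm e}^{-I_\delta d}$.

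\emph{Large discrepancy on typical boxes.} Note that $v_d/(\varepsilon\norm{v_d}_\Phi)=2^dv_d/(\varepsilon\alpha_d)$. On $A\cap\{X\ge\tau\}$ this gives
\[
\log\!\Bigl(1+\tfrac{v_d}{\varepsilon\norm{v_d}_\Phi}X\Bigr)\ge \log\!\Bigl(1+\tfrac{\tau {\rm e}^{(c_0-\delta)d}}{\varepsilon\alpha_d}\Bigr)=L_d .
\]
Hence $X\,\ind_{A\cap\{X\ge\tau\}}\le L_d^{-1}X\log(1+\cdots)$. Applying the first estimate of \eqref{eq:modular-X} from Lemma~\ref{lem:modular-estimate} bounds this contribution by $\varepsilon\alpha_d/L_d$. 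Together with the pointwise bound this gives \eqref{eq:moment-lower-explicit}.

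\emph{Asymptotics.} By Proposition~\ref{prop:initial}, $\alpha_d/d\to c_0$. By \eqref{eq:logad}, $\log\alpha_d=o(d)$, so $L_d=(c_0-\delta)d+O(1)+o(d)$ and
\[
\frac{\varepsilon\alpha_d}{L_d}\to \frac{\varepsilon c_0}{c_0-\delta}.
\]
The condition $\delta<c_0(1-\varepsilon)$ ensures this limit is less than $1$, and the choice of $\tau$ in \eqref{eq:delta-tau} makes $\eta:=1-\tau-\varepsilon c_0/(c_0-\delta)$ positive. Set $b:=\eta/2$ and choose $d_1$ so that ${\rm e}^{-I_\delta d}+\bigl|\varepsilon\alpha_d/L_d-\varepsilon c_0/(c_0-\delta)\bigr|\le\eta/2$ for $d\ge d_1$. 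Both $b$ and $d_1$ depend only on $\varepsilon,\delta,\tau$, since $\alpha_d$ does not depend on the nodes or weights.

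The main subtlety is the large-discrepancy step. The logarithmic weight in the modular is what converts the $L\log L$ control into the factor $1/L_d$, which is of order $1/d$, and this factor exactly cancels the growth $\alpha_d\sim c_0 d$. The bound $\E_{\nu_d}[X]\le\kappa\varepsilon\alpha_d$ alone would be too weak.
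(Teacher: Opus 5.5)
Your proof is correct and follows essentially the same route as the paper: the same typical-volume event $A_{d,\delta}$ with the same Chernoff parameter, the same lower bound $L_d$ on the logarithmic factor when $X\ge\tau$, the first estimate in \eqref{eq:modular-X} to produce $\varepsilon\alpha_d/L_d$, and the same asymptotic argument. The only difference is cosmetic: you use $Z^{1/2}\ge 1-\min\{X,1\}$ and split $\E_{\nu_d}[\min\{X,1\}]$ over three regions, whereas the paper uses $z^{1/2}\ge 1-|z-1|$ restricted to $A_{d,\delta}$ together with the pointwise inequality $x\le\tau+x\log(\cdots)/L_d$.
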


\begin{proof}
Let \(S_d\) be as in \eqref{eq:Sd} and define $A_{d,\delta} :=\{S_d\leq (\frac12+\delta)d\}$. We first estimate the probability of its complement.  If \(Y\) is exponential with rate two, then, for \(\lambda \in (0,2)\),
\[
 \E[{\rm e}^{\lambda Y}]=\frac{2}{2-\lambda}.
\]
Therefore, by the exponential Markov inequality, 
\[
 \nu_d\left(S_d\geq (\tfrac12+\delta)d\right)
 \leq {\rm e}^{-\lambda (1/2+\delta) d}\, \left(\frac{2}{2-\lambda}\right)^d.
\]
Choosing $\lambda=2-(1/2+\delta)^{-1}=\frac{4\delta}{1+2\delta}\in(0,2)$ yields
\begin{align*}
 \nu_d(A_{d,\delta}^c)
 &\leq {\rm e}^{-d (2\delta-\log(1+2\delta))} ={\rm e}^{-I_\delta d}.
\end{align*}
The number \(I_\delta\) is positive because
\(x-\log(1+x)>0\) for every \(x>0\).

On \(A_{d,\delta}\),
\[
 v_d(\boldsymbol T)={\rm e}^{-S_d}
 \geq {\rm e}^{-(1/2+\delta)d}.
\]
Since \(\norm{v_d}_{\Phi}=\alpha_d2^{-d}\), it follows that
\begin{equation}\label{eq:volume-ratio-lower}
 \frac{v_d(\boldsymbol T)}{\varepsilon \norm{v_d}_{\Phi}}
 \geq\frac{{\rm e}^{(\log2-1/2-\delta)d}}{\varepsilon \alpha_d}
 =\frac{{\rm e}^{(c_0-\delta)d}}{\varepsilon \alpha_d}
 \qquad\text{on }A_{d,\delta}.
\end{equation}
For every \(x\geq0\), \eqref{eq:volume-ratio-lower} implies, pointwise on
\(A_{d,\delta}\),
\begin{equation}\label{eq:pointwise-localization}
 x
 \leq\tau+
 \frac{x\log\!\left(1+\frac{v_d(\boldsymbol T)}{\varepsilon \norm{v_d}_{\Phi}}x\right)}{L_d}.
\end{equation}
Indeed, if \(x<\tau\), then the right-hand side is at least \(\tau>x\).  If \(x\geq\tau\), then
\[
 \log\!\left(1+\frac{v_d(\boldsymbol T)}{\varepsilon \norm{v_d}_{\Phi}}x\right)
 \geq\log\!\left(1+\frac{\tau {\rm e}^{(c_0-\delta)d}}{\varepsilon \alpha_d}\right)
 =L_d,
\]
so the second term on the right-hand side of \eqref{eq:pointwise-localization} is at least \(x\).

Apply \eqref{eq:pointwise-localization} with \(x=X\), multiply by
\(\ind_{A_{d,\delta}}\), take expectations, and use
\eqref{eq:modular-X}.  We obtain
\begin{equation}\label{eq:localized-X}
\E_{\nu_d}[X\ind_{A_{d,\delta}}] \leq\tau+\frac{\varepsilon \alpha_d}{L_d}.
\end{equation}

For \(z\geq0\) we have $z^{1/2}\geq1-|z-1|$. Indeed, for \(0\leq z\leq1\), the right-hand side equals
\(z \le z^{1/2}\); for $z \ge 1$ it equals \(2-z\leq1\leq z^{1/2}\).  Applying our estimate to \(Z\) on \(A_{d,\delta}\) gives
\begin{align*}
\E_{\nu_d}[Z^{1/2}] &\geq\E_{\nu_d}[Z^{1/2}\ind_{A_{d,\delta}}] \geq\nu_d(A_{d,\delta}) -\E_{\nu_d}[X\ind_{A_{d,\delta}}].
\end{align*}
The probability estimate and \eqref{eq:localized-X} prove
\eqref{eq:moment-lower-explicit}.

It remains to show that the right-hand side of
\eqref{eq:moment-lower-explicit} is eventually positive.  By
\eqref{eq:ad-asymptotic}, \(\lim_{d \rightarrow \infty} \alpha_d/d = c_0\) and by \eqref{eq:logad}, \(\lim_{d \rightarrow \infty} \log \alpha_d/d = 0\).  Writing
\[
 L_d=\log(1+{\rm e}^{q_d}) \qquad \mbox{ and } \qquad 
 q_d=(c_0-\delta)d-\log \alpha_d+\log(\tau/\varepsilon),
\]
we have \(\lim_{d \rightarrow \infty} q_d/d = c_0-\delta>0\), and hence $\lim_{d \rightarrow \infty} L_d/d = c_0-\delta$. Therefore,
\begin{equation*}
 \lim_{d \rightarrow \infty} \frac{\varepsilon \alpha_d}{L_d} = \frac{\varepsilon c_0}{c_0-\delta}.
\end{equation*}
By \eqref{eq:delta-tau},
\[
 b_\infty
 :=1-\tau-\frac{\varepsilon c_0}{c_0-\delta}>0.
\]
The right-hand side of \eqref{eq:moment-lower-explicit} converges to
\(b_\infty\).  Thus it is at least, for example,
\(b=b_\infty/2>0\) for all sufficiently large \(d\), proving
\eqref{eq:moment-lower}.
\end{proof}

\begin{lemma}\label{lem:one-dimensional}
Let \(\boldsymbol x\in[0,1]^d\) and define
\begin{equation}\label{eq:Yx}
 Y_{\boldsymbol x}(\boldsymbol t)
 :=\frac{\ind_{\{\boldsymbol x<\boldsymbol t\}}}{v_d(\boldsymbol t)}
\end{equation}
when \(v_d(\boldsymbol t)>0\), and set it equal to zero otherwise. Set $R:=\sqrt{4 \sqrt{3}-6}$. Then
\begin{equation}\label{eq:tensor-contraction}
\E_{\nu_d}[Y_{\boldsymbol x}^{1/2}] \leq R^d \left(\E_{\nu_d}[Y_{\boldsymbol x}]\right)^{1/2}.
\end{equation}
\end{lemma}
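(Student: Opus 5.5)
The plan is to exploit the product structure and reduce \eqref{eq:tensor-contraction} to a one-variable inequality. For \(\boldsymbol t\) with \(v_d(\boldsymbol t)>0\) we have
\[
 Y_{\boldsymbol x}(\boldsymbol t)=\prod_{j=1}^d \frac{\ind_{\{x_j<t_j\}}}{t_j},
\]
and by \eqref{eq:productmeasure} the measure \(\nu_d=\mu^{\otimes d}\) is a product measure. Therefore both \(\E_{\nu_d}[Y_{\boldsymbol x}^{1/2}]\) and \(\E_{\nu_d}[Y_{\boldsymbol x}]\) factor into products of one-dimensional \(\mu\)-expectations. Taking the square root of the second product is compatible with this factorization. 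So it suffices to prove, for every \(x\in[0,1]\),
\[
 \E_\mu\!\left[\left(\frac{\ind_{\{x<T\}}}{T}\right)^{1/2}\right]
 \le R\left(\E_\mu\!\left[\frac{\ind_{\{x<T\}}}{T}\right]\right)^{1/2}.
\]
The exceptional null set where \(v_d=0\) plays no role.

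Next I compute both one-dimensional sides explicitly using \(\dd\mu(t)=2t\dd t\):
\[
 \E_\mu\!\left[\frac{\ind_{\{x<T\}}}{T}\right]=\int_x^1 2\dd t=2(1-x),
 \qquad
 \E_\mu\!\left[\frac{\ind_{\{x<T\}}}{T^{1/2}}\right]=\int_x^1 2t^{1/2}\dd t=\tfrac43\bigl(1-x^{3/2}\bigr).
\]
At \(x=1\) both sides vanish, so I may assume \(x<1\). The claim then becomes
\[
 \sup_{0\le x<1}\frac{\tfrac{16}{9}(1-x^{3/2})^2}{2(1-x)}\le R^2=4\sqrt3-6.
\]
Substituting \(x=u^2\) with \(u\in[0,1)\) and factoring \(1-u^3=(1-u)(1+u+u^2)\) and \(1-u^2=(1-u)(1+u)\), the quantity to bound becomes
\[
 g(u):=\frac89\cdot\frac{(1-u)(1+u+u^2)^2}{1+u}.
\]

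The main (though elementary) step is maximizing \(g\) on \([0,1)\). I will set the logarithmic derivative to zero:
\[
 -\frac1{1-u}-\frac1{1+u}+\frac{2(1+2u)}{1+u+u^2}=0 .
\]
This simplifies to \((1+2u)(1-u^2)=1+u+u^2\), that is, \(2u^2+2u-1=0\). Its unique root in \([0,1)\) is \(u^*=(\sqrt3-1)/2\).

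It remains to compare endpoints with the critical point. At the endpoints, \(g(0)=8/9\) and \(g(u)\to0\) as \(u\to1\). Since \(8/9<4\sqrt3-6\approx0.928\), the maximum is attained at \(u^*\). There I will use \(u^{*2}=(1-2u^*)/2\) to simplify \(1+u^*+u^{*2}=\tfrac32\), so that
\[
 g(u^*)=\frac89\cdot\frac94\cdot\frac{1-u^*}{1+u^*}=2\cdot\frac{3-\sqrt3}{1+\sqrt3}=4\sqrt3-6=R^2 .
\]
This confirms that the constant \(R\) in the lemma is exactly this maximum, which gives the one-dimensional inequality. Multiplying the one-dimensional inequalities over \(j=1,\dots,d\) then yields \eqref{eq:tensor-contraction}.
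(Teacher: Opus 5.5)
Your proposal is correct and follows the paper's proof. Both reduce via the product structure to one dimension, compute $\E_\mu[Y_x]=2(1-x)$ and $\E_\mu[Y_x^{1/2}]=\frac43(1-x^{3/2})$, and maximize the ratio; you carry out the maximization explicitly (maximizer $x=u^{*2}=\frac12(2-\sqrt3)$), where the paper only asserts the result. One small algebra slip: $(1+2u)(1-u^2)=1+u+u^2$ actually reduces to $u(2u^2+2u-1)=0$, so $u=0$ is also a critical point, but this does not affect the conclusion because you check the endpoint value $g(0)=8/9$ anyway.
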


\begin{proof}
Because of the product structure it suffices to prove the result in the one-dimensional case for $x \in [0,1)$. (Note that if some $x_j=1$, both sides of \eqref{eq:tensor-contraction} vanish.) With respect to \(\mu\) from \eqref{eq:productmeasure}, direct integration gives
\begin{align*}
 \E_\mu[Y_x]
 &=\int_x^1\frac1t\,2t\dd t=2(1-x),\ \mbox{and}
 \\
 \E_\mu [Y_x^{1/2}]
 &=\int_x^1\frac1{t^{1/2}}\,2t\dd t
 =\frac{4}{3}\left(1-x^{3/2}\right).
\end{align*}
Thus for $x \in [0,1)$, $$\frac{\E_\mu [Y_x^{1/2}]}{(\E_\mu[Y_x])^{1/2}}=\frac{2 \sqrt{2}}{3} \frac{1-x^{3/2}}{(1-x)^{1/2}} \le R,$$ where the last inequality follows from a one-variable maximization (equality holds for $x=\tfrac12(2-\sqrt{3})$).
\end{proof}

\begin{lemma}\label{lem:mixture}
Let \(G_{\cP,\cA}\) be given by \eqref{eq:Gintro}, with weights \(a_k\geq0\), and let \(Z=G_{\cP,\cA}/v_d\) be as in \eqref{eq:Z}. Then
\begin{equation*}
\E_{\nu_d}[Z^{1/2}] \leq R^d N^{1/2} \left(\E_{\nu_d}[Z]\right)^{1/2}.
\end{equation*}
\end{lemma}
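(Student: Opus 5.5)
Since the weights are nonnegative, I will write $Z$ as a positive mixture of the atoms $Y_{\bsx_k}$ from \eqref{eq:Yx}:
\[
Z(\bst)=\sum_{k=1}^N a_k Y_{\bsx_k}(\bst)
\]
for $\nu_d$-almost every $\bst$. The exceptional set where $v_d=0$ has $\nu_d$-measure zero, so it plays no role.

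The first step uses the subadditivity of $z\mapsto z^{1/2}$ on $[0,\infty)$, namely $(u+w)^{1/2}\le u^{1/2}+w^{1/2}$ for $u,w\ge0$. This is exactly where $a_k\ge0$ is needed. It gives the pointwise bound $Z^{1/2}\le\sum_{k} a_k^{1/2}Y_{\bsx_k}^{1/2}$. Taking $\nu_d$-expectations and applying Lemma~\ref{lem:one-dimensional} to each atom, I get
\[
\E_{\nu_d}[Z^{1/2}]\le \sum_{k=1}^N a_k^{1/2}\,\E_{\nu_d}[Y_{\bsx_k}^{1/2}]
\le R^d\sum_{k=1}^N \bigl(a_k\,\E_{\nu_d}[Y_{\bsx_k}]\bigr)^{1/2}.
\]

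The second step is the Cauchy--Schwarz inequality over the index $k$: $\sum_{k=1}^N u_k^{1/2}\le N^{1/2}\bigl(\sum_{k=1}^N u_k\bigr)^{1/2}$ with $u_k=a_k\E_{\nu_d}[Y_{\bsx_k}]\ge0$. By linearity of expectation, $\sum_k a_k\E_{\nu_d}[Y_{\bsx_k}]=\E_{\nu_d}[Z]$. Combining the two steps yields $\E_{\nu_d}[Z^{1/2}]\le R^dN^{1/2}(\E_{\nu_d}[Z])^{1/2}$, which is the claim.

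I expect no real obstacle. The only points to check are measure-theoretic bookkeeping. First, the identity $Z=\sum_k a_kY_{\bsx_k}$ must hold off a $\nu_d$-null set; with the conventions $Z:=0$ and $Y_{\bsx}:=0$ where $v_d=0$, it holds everywhere. Second, the expectations might be infinite, but then the inequality is trivial. They are in fact finite, since $\E_{\nu_d}[Y_{\bsx}]\le 2^d$ by the one-dimensional computation.
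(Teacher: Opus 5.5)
Your proposal is correct and follows the paper's proof essentially step for step. You use subadditivity of $z\mapsto z^{1/2}$ (this is where $a_k\ge0$ is needed), then Lemma~\ref{lem:one-dimensional} atom by atom, then Cauchy--Schwarz over $k$, which for $\theta=1/2$ is exactly the right-hand inequality in \eqref{eq:subadditivity} that the paper derives from Jensen's inequality; your remarks on the null set $\{v_d=0\}$ and on finiteness of the expectations only make explicit what the paper leaves implicit.
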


\begin{proof}
For \(\theta \in (0,1)\) and \(z_1,\ldots,z_N\geq0\) we will use the estimates
\begin{equation}\label{eq:subadditivity}
 \left(\sum_{k=1}^N z_k\right)^\theta
 \leq\sum_{k=1}^N z_k^\theta \le N^{1-\theta} \left(\sum_{k=1}^N z_k\right)^\theta.
\end{equation}
Here the left inequality is well known (see, e.g., \cite[Eq.~(4.26)]{LP26} for a proof) and the right inequality follows from Jensen's inequality for the concave function \(x\mapsto x^\theta\), applied to the uniform probability measure on \(\{1,\ldots,N\}\).

By \eqref{eq:Yx}, $Z=\sum_{k=1}^Na_kY_{\boldsymbol x_k}$ $\nu_d$-almost surely and all summands are nonnegative. Using \eqref{eq:subadditivity} with $\theta=1/2$ and \eqref{eq:tensor-contraction}, we get
\begin{align*}
 \E_{\nu_d}[Z^{1/2}] \leq\sum_{k=1}^N a_k^{1/2}
 \E_{\nu_d}[Y_{\boldsymbol x_k}^{1/2}]\leq R^d  \sum_{k=1}^N \left(a_k\E_{\nu_d}[Y_{\boldsymbol x_k}]\right)^{1/2} \le R^d N^{1/2}
 \left(\E_{\nu_d}[Z]\right)^{1/2},
\end{align*}
as claimed.
\end{proof}

\begin{proof}[Proof of Theorem~\ref{thm:main}]
Fix \(\varepsilon\in(0,1)\) and choose
\(\delta\) and \(\tau\) satisfying \eqref{eq:delta-tau}.  Let an $N$-point set $\cP$ and corresponding nonnegative weights $\cA$ satisfy $\norm{\Delta_{\cP,\cA}}_{\Phi}\leq\varepsilon \norm{v_d}_{\Phi}$.

By Lemma~\ref{lem:localization}, there are constants \(b>0\) and \(d_1\) such that
\begin{equation*}
\E_{\nu_d}[Z^{1/2}]\geq b \qquad \mbox{for every } d\geq d_1,
\end{equation*}
and by Lemma~\ref{lem:mixture} and \eqref{eq:EZ-upper},
\begin{equation*}
 \E_{\nu_d}[Z^{1/2}] \leq R^d N^{1/2}
 \left(1+\kappa\varepsilon \alpha_d\right)^{1/2}.
\end{equation*}
Combining these two estimates, we obtain the explicit bound
\begin{equation}\label{eq:explicit-rule-bound}
 N
 \geq b^2
 \left(1+\kappa\varepsilon \alpha_d\right)^{-1}
 R^{-2d}.
\end{equation}

By \eqref{eq:ad-asymptotic}, \(\alpha_d/d\to c_0\).  Therefore, after increasing \(d_1\) if necessary, we may assume that \(\alpha_d\leq2c_0d\).  Since
\(d\geq1\), we have $1+\kappa\varepsilon \alpha_d
 \leq (1+2\kappa\varepsilon c_0)d$.
Thus \eqref{eq:explicit-rule-bound} implies
\[
 N
 \geq \frac{c_{\varepsilon}}{d}
 R^{-2d},
\]
where one may take $c_{\varepsilon} :=b^2 (1+2\kappa\varepsilon c_0)^{-1}>0$. This lower bound holds for every $N$-point set $\cP$ with corresponding nonnegative weights $\cA$ whose normalized $L\log L$-Luxemburg discrepancy is at most \(\varepsilon\). 

If the infimum defining \(\disc_{\Phi,+}(N,d)\) is not attained we proceed as follows. Fix $\varepsilon\in(0,1)$, put $\varepsilon':=(1+\varepsilon)/2\in(\varepsilon,1)$, and choose $\delta$ and $\tau$ satisfying \eqref{eq:delta-tau} with $\varepsilon'$ in place of $\varepsilon$. Let $N:=N_{\Phi,+}(\varepsilon,d)$. Since $\disc_{\Phi,+}(N,d)\le\varepsilon\,\disc_{\Phi}(0,d)<\varepsilon'\disc_{\Phi}(0,d)$, there exist an $N$-point set $\cP$ and nonnegative weights $\cA$ with $\norm{\Delta_{\cP,\cA}}_{\Phi}\le\varepsilon'\norm{v_d}_{\Phi}$. The theorem follows.
\end{proof}

\section{Appendix}

\subsection*{Discrepancy-integration duality}

The complementary (convex conjugate) Young function $\Phi^{\ast}$ is defined as $$\Phi^{\ast}(s):=\sup_{t \ge 0}(s t-\Phi(t)) \qquad \mbox{for } s \ge 0.$$ Since $\Phi'$ is continuous and strictly increasing with $\Phi'(0)=0$ and $\Phi'(t) \rightarrow \infty$ as $t \rightarrow \infty$, the supremum is attained at $t$ with $s=\Phi'(t)$, i.e., at $t=(\Phi')^{-1}(s)$. Hence 
\begin{equation}\label{eq:Aast}
\Phi^{\ast}(s)=s (\Phi')^{-1}(s)-\Phi((\Phi')^{-1}(s)) 
\end{equation}
and thus 
\begin{equation}\label{eq:compAalpha}
\Phi^{\ast}(\Phi'(t))=t \Phi'(t)-\Phi(t).
\end{equation}

\begin{lemma}[Orlicz--H\"older inequality]\label{le:OrHo}
Let $f$ be bounded and measurable and $g$ measurable. We have
\begin{equation}\label{eq:orlicz-holder-associate}
        \int_{[0,1]^d} |f(\bst)g(\bst)|\dd\bst
        \le
        \norm{f}_{\Phi}\norm{g}_{\Phi,*}.
\end{equation}
The inequality is sharp. For bounded, measurable and not a.e.~zero  functions define the so-called norming function
\[
        g_f(\bst)
        :=
        \frac{
        \Phi'\left(\frac{|f(\bst)|}{\norm{f}_{\Phi}}\right)
        }{\int_{[0,1]^d}
        \frac{|f(\bss)|}{\norm{f}_{\Phi}}
        \Phi'\left(\frac{|f(\bss)|}{\norm{f}_{\Phi}}\right)
        \dd\bss
        } .
\]
Then
\[
        \norm{g_f}_{\Phi,*}=1
        \qquad \mbox{ and } \qquad
        \int_{[0,1]^d}|f(\bst)|g_f(\bst)\dd\bst
        =\norm{f}_{\Phi} ,
\]
i.e., for $f$ and $g_f$ we have equality in \eqref{eq:orlicz-holder-associate}.
\end{lemma}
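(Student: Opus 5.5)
The plan is to follow the standard Young-inequality route, using the identity \eqref{eq:compAalpha} for the equality case.

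\emph{Step 1: the inequality.} If $f=0$ a.e. or $\norm{g}_{\Phi,*}=\infty$ there is nothing to prove. Otherwise set $h:=f/\norm{f}_{\Phi}$. Then $\norm{h}_{\Phi}=1$ by homogeneity of the Luxemburg norm, which is immediate from \eqref{eq:Luxemburg}. The definition \eqref{def:assoc:norm} of the associate norm gives $\int|hg|\le\norm{g}_{\Phi,*}$. Multiplying by $\norm{f}_{\Phi}$ yields \eqref{eq:orlicz-holder-associate}. For bounded $f$, Lemma~\ref{lem:modular} guarantees $0<\norm{f}_{\Phi}<\infty$, so the normalization is legitimate.

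\emph{Step 2: the equality $\int|f|g_f=\norm{f}_{\Phi}$.} Write $u:=|f|/\norm{f}_{\Phi}$ and $D:=\int u\,\Phi'(u)$. Then $\int |f| g_f = \norm{f}_{\Phi}\int u\Phi'(u)/D=\norm{f}_{\Phi}$ by construction. We also need $0<D<\infty$. Finiteness follows from boundedness of $f$ and continuity of $\Phi'$. Positivity follows because $\Phi'(s)=\log(1+s)+s/(1+s)>0$ for $s>0$ and $u>0$ on a set of positive measure. This equality also gives $\norm{g_f}_{\Phi,*}\ge1$, by testing the supremum with $h=|f|/\norm{f}_\Phi$, which has Luxemburg norm $1$.

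\emph{Step 3: the upper bound $\norm{g_f}_{\Phi,*}\le1$.} This is the main obstacle. Fix $h$ with $\norm{h}_{\Phi}\le1$, so $\rho_\Phi(h)\le1$ by Lemma~\ref{lem:modular} (or by the infimum argument used in Lemma~\ref{lem:L1-embedding} for unbounded $h$). A naive Young inequality gives
\[
|h|\Phi'(u)\le\Phi(|h|)+\Phi^*(\Phi'(u)).
\]
Using \eqref{eq:compAalpha} and $\rho_\Phi(u)=1$ from Lemma~\ref{lem:modular}, this yields
\[
\int|h|\Phi'(u)\le 1+\int(u\Phi'(u)-\Phi(u))=D.
\]
Dividing by $D$ gives $\int |h| g_f\le1$. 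So in fact the naive Young bound suffices: the normalization constant $D$ is exactly $\rho_\Phi(h)$'s bound plus $\int\Phi^*(\Phi'(u))$, because $\rho_\Phi(u)=1$.

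Two checks remain. First, $\int\Phi^*(\Phi'(u))<\infty$, which holds since $u$ is bounded. Second, Young's inequality $st\le\Phi(t)+\Phi^*(s)$ must hold, which is immediate from the definition of $\Phi^*$. Combining Steps 2 and 3 gives $\norm{g_f}_{\Phi,*}=1$ and equality in \eqref{eq:orlicz-holder-associate}, which is the asserted sharpness.
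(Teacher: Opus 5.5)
Your proposal is correct and follows essentially the same route as the paper. Both use the definition of the associate norm applied to $f/\norm{f}_{\Phi}$ for the inequality, test with $|f|/\norm{f}_{\Phi}$ to get $\norm{g_f}_{\Phi,*}\ge1$, and obtain the upper bound from Young's inequality combined with \eqref{eq:compAalpha}, $\rho_\Phi(h)\le1$ and $\rho_\Phi(f/\norm{f}_{\Phi})=1$. One detail needs tightening: for unbounded $h$ with $\norm{h}_{\Phi}=1$ the infimum argument only gives $\rho_\Phi(h/K)\le1$ for $K>1$, so you still need the monotone convergence step $K\downarrow1$, as the paper does, to conclude $\rho_\Phi(h)\le1$.
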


\begin{proof}
For $f \not=0$ we have 
\begin{align*}
\int_{[0,1]^d}|f(\bst)g(\bst)|\dd\bst = & \|f\|_{\Phi} \int_{[0,1]^d} \frac{|f(\bst)|}{\|f\|_{\Phi}} |g(\bst)| \dd \bst \\ \le &  \|f\|_{\Phi} \sup_{\|h\|_{\Phi}\le 1} \int_{[0,1]^d} |h(\bst) g(\bst)| \dd \bst = \|f\|_{\Phi}  \norm{g}_{\Phi,*}.  
\end{align*}
This proves \eqref{eq:orlicz-holder-associate}. It remains to prove the sharpness of the bound.

On the one hand we have
\begin{align*}
\|g_f\|_{\Phi,\ast} = & \sup_{\|h\|_{\Phi} \le 1} \frac{\int_{[0,1]^d}|h(\bst)| \Phi'\left(\frac{|f(\bst)|}{\norm{f}_{\Phi}}\right) \dd \bst}{
\int_{[0,1]^d} \frac{|f(\bss)|}{\norm{f}_{\Phi}} \Phi'\left(\frac{|f(\bss)|}{\norm{f}_{\Phi}}\right) \dd\bss}\ge  \frac{\int_{[0,1]^d} \frac{|f(\bst)|}{\norm{f}_{\Phi}} \Phi'\left(\frac{|f(\bst)|}{\norm{f}_{\Phi}}\right)\dd \bst}{
\int_{[0,1]^d} \frac{|f(\bss)|}{\norm{f}_{\Phi}} \Phi'\left(\frac{|f(\bss)|}{\norm{f}_{\Phi}}\right)\dd\bss}=1.
\end{align*}
On the other hand, choose any function $h$ with $\|h\|_{\Phi}\le 1$. Then for any $K>1\ge \|h\|_{\Phi}$ we have $\rho_{\Phi}(h/K) \le 1$, and monotone convergence for $K \downarrow 1$ gives $\int_{[0,1]^d}\Phi(|h(\bst)|) \dd \bst \le 1$. Using Young's inequality $v w \le \Phi(v)+\Phi^{\ast}(w)$ with $v=|h(\bst)|$ and $w=\Phi'(|f(\bst)|/\|f\|_{\Phi})$ pointwise and integration over $[0,1]^d$ yields 
\begin{align*}
\lefteqn{\int_{[0,1]^d} |h(\bst)| \Phi'\left(\frac{|f(\bst)|}{\|f\|_{\Phi}}\right) \dd \bst}\\
& \le \int_{[0,1]^d} \Phi(|h(\bst)|) \dd \bst   +  \int_{[0,1]^d} \Phi^{\ast}\left(\Phi'\left(\frac{|f(\bst)|}{\|f\|_{\Phi}}\right)\right) \dd \bst  \\
& =  \int_{[0,1]^d} \Phi(|h(\bst)|) \dd \bst   +  \int_{[0,1]^d} \frac{|f(\bst)|}{\|f\|_{\Phi}} \Phi'\left(\frac{|f(\bst)|}{\|f\|_{\Phi}}\right) \dd \bst -  \int_{[0,1]^d} \Phi\left(\frac{|f(\bst)|}{\|f\|_{\Phi}}\right) \dd \bst,
\end{align*}
where we used \eqref{eq:compAalpha}. Since $\int_{[0,1]^d} \Phi(|h(\bst)|) \dd \bst \le 1$ and $\int_{[0,1]^d} \Phi\left(\frac{|f(\bst)|}{\|f\|_{\Phi}}\right) \dd \bst=1$ it follows that 
$$\int_{[0,1]^d} |h(\bst)| \Phi'\left(\frac{|f(\bst)|}{\|f\|_{\Phi}}\right) \dd \bst \le \int_{[0,1]^d} \frac{|f(\bst)|}{\|f\|_{\Phi}} \Phi'\left(\frac{|f(\bst)|}{\|f\|_{\Phi}}\right) \dd \bst.$$ Thus, for every function $h$ with $\|h\|_{\Phi}\le 1$ we have $$\int_{[0,1]^d} |h(\bst) g_f(\bst)| \dd \bst=\frac{\int_{[0,1]^d}|h(\bst)| \Phi'\left(\frac{|f(\bst)|}{\norm{f}_{\Phi}}\right)\dd \bst}{
\int_{[0,1]^d} \frac{|f(\bst)|}{\norm{f}_{\Phi}} \Phi'\left(\frac{|f(\bst)|}{\norm{f}_{\Phi}}\right) \dd\bst}\le 1$$ and this implies that $\|g_f\|_{\Phi,\ast} \le 1$. Together with the lower bound we deduce $\|g_f\|_{\Phi,\ast}=1$.

Finally, it is obvious that 
$$\int_{[0,1]^d}|f(\bst) g_f(\bst)|\dd\bst =  \frac{\int_{[0,1]^d}|f(\bst)| 
        \Phi'\left(\frac{|f(\bst)|}{\|f\|_{\Phi}}\right) \dd \bst
        }{\int_{[0,1]^d}
        \frac{|f(\bss)|}{\|f\|_{\Phi}}
        \Phi'\left(\frac{|f(\bss)|}{\|f\|_{\Phi}}\right)
        \dd\bss} = \|f\|_{\Phi}.$$
This finishes the proof.
\end{proof}

Now we can prove the discrepancy-integration duality stated in Remark~\ref{re:duality}.

\begin{proof}[Proof of Remark~\ref{re:duality}]
As an auxiliary function we define the upper anchored local discrepancy
\begin{equation*}
\nabla_{\cP,\cA}(\bst) :=\prod_{j=1}^d(1-t_j) -\sum_{k=1}^N a_k\ind_{\{\bst \le \bsx_k\}} \qquad \mbox{for } \bst \in [0,1]^d,
\end{equation*}
which is obviously not a.e. zero, because $\prod_{j=1}^d(1-t_j)$ is not a step function.
Let $\eta:=\lambda-\sum_{k=1}^N a_k \delta_{\bsx_k}$ where $\delta_{\bsx_k}$ is the Dirac measure at $\bsx_k$ and $\lambda$ is the Lebesgue measure on $[0,1]^d$ and let $L(f):= I(f)-Q_{\cP,\cA}(f)$. 
For $f \in \cF_d$ we have $$f(\bsx)=\int_{[0,1]^d} Df(\bst) {\bf 1}_{\{\bst < \bsx\}} \dd \bst.$$ From this identity and an application of Fubini's theorem (note that $\|D f\|_{L_1} \le \|1\|_{\Phi} \|D f\|_{\Phi,\ast}$ by Lemma~\ref{le:OrHo}, and hence $D f \in L_1$) we deduce \begin{eqnarray*}
L(f) = \int_{[0,1]^d} D f(\bst) \eta([\bst,\boldsymbol{1}]) \dd \bst = \int_{[0,1]^d} D f(\bst) \nabla_{\cP,\cA}(\bst) \dd \bst .
\end{eqnarray*}
Hence, according to Lemma~\ref{le:OrHo}, 
\begin{equation}\label{hoe:disc}
|L(f)| \le \|D f\|_{\Phi,\ast} \|\nabla_{\cP,\cA}\|_{\Phi}=\|f\|_{\cF_d} D_N^{\Phi}(\overline{\cP},\cA), 
\end{equation}
because after a change of variables $\bst \mapsto \boldsymbol{1}-\bst$ in the integral in the definition of the $\|\cdot\|_{\Phi}$-norm and because $\nabla_{\cP,\cA}(\boldsymbol{1}-\bst)=-\Delta_{\overline{\cP},\cA}(\bst)$ for almost all $\bst$, it follows that $$\|\nabla_{\cP,\cA}\|_{\Phi}=\|\Delta_{\overline{\cP},\cA}\|_{\Phi}=D_N^{\Phi}(\overline{\cP},\cA).$$ Hence $$e(Q_{\cP,\cA};\cF_d) \le D_N^{\Phi}(\overline{\cP},\cA).$$     

Define $$f_{\ast}(\bsx):=\frac{1}{n_{\nabla}} \int_{[\bszero,\bsx]} \mathrm{sgn}(\nabla_{\cP,\cA}(\bst))\,\Phi'\left(\frac{|\nabla_{\cP,\cA}(\bst)|}{\|\nabla_{\cP,\cA}\|_{\Phi}}\right) \dd \bst,$$ where $$n_{\nabla}:=\int_{[0,1]^d} \frac{|\nabla_{\cP,\cA}(\bst)|}{\|\nabla_{\cP,\cA}\|_{\Phi}} \Phi'\left(\frac{|\nabla_{\cP,\cA}(\bst)|}{\|\nabla_{\cP,\cA}\|_{\Phi}}\right) \dd \bst,$$ such that $L(f_\ast)=\|\nabla_{\cP,\cA}\|_{\Phi}$. Then $Df_{\ast}= \mathrm{sgn}(\nabla_{\cP, \cA}) \, g_{\nabla_{\cP,\cA}}$ is the signed version of the norming function from Lemma~\ref{le:OrHo} corresponding to the upper anchored local discrepancy function $\nabla_{\cP,\cA}$; it has $\|\cdot\|_{\Phi,\ast}$-norm equal to one and yields equality in the H\"older argument applied in \eqref{hoe:disc}. This finishes the proof of \eqref{disc:int:dual}.

For the initial error, it follows from the above that $$e(0;\cF_d)=\|\nabla_{\emptyset}\|_{\Phi}.$$ We have $\nabla_{\emptyset}(\bst)=\prod_{j=1}^d(1-t_j)$. Recall that $\Delta_{\emptyset}(\bst)= - \prod_{j=1}^d t_j$. Hence, again, a change of variables $\bst \mapsto \boldsymbol{1}-\bst$ in the integral in the definition of the $\|\cdot\|_{\Phi}$-norm shows that $\|\nabla_{\emptyset}\|_{\Phi}=\|\Delta_{\emptyset}\|_{\Phi}$. 
\end{proof}

\subsection*{Embeddings}

\paragraph{Proof of Remark~\ref{re:1}.} Let $r:=\min(p,2) \in (1,2]$. We show the existence of a number $c_r>0$ such that $\Phi(t) \le c_r t^r$ for all $t \ge 0$. For \(r=2\), the claim follows from \(\log(1+t)\le t\). For $r \in (1,2)$ we have $t^{1-r} \log(1+t) \le t^{2-r} \rightarrow 0$ as $t \to 0$, since $r<2$. Thus $\lim_{t \to 0} \Phi(t)/t^r=0$. On the other hand, $$\lim_{t \to \infty} \frac{\Phi(t)}{t^r}= \lim_{t \to \infty} t^{1-r} \log(1+t)=0,$$ since $r>1$. Since $t \mapsto \Phi(t)/t^r$ is continuous on $(0,\infty)$, this proves the claim. 

The assertion is trivial if $f=0$. Now let $0\ne f \in L_r$. Set $K=c_r^{1/r} \|f\|_{L_r}$. Then we have 
\begin{align*}
\int_{[0,1]^d} \Phi\left(\frac{|f(\bst)|}{K}\right) \dd \bst
&\le c_r \int_{[0,1]^d} \left(\frac{|f(\bst)|}{K}\right)^r \dd \bst\\
&=\frac{c_r}{K^r} \int_{[0,1]^d} |f(\bst)|^r \dd \bst
=\frac{c_r}{c_r \|f\|_{L_r}^r} \|f\|_{L_r}^r=1.
\end{align*}
Hence, $\|f\|_{\Phi} \le c_r^{1/r} \|f\|_{L_r}$.

Now, for $p \in (1,2]$ we have $r=p$ and hence $\|f\|_{\Phi}\le c_p^{1/p} \|f\|_{L_p}$. For $p >2$ we have $r=2$ and hence $\|f\|_{\Phi}\le c_2^{1/2} \|f\|_{L_2} \le c_2^{1/2} \|f\|_{L_p}$, due to the monotonicity of the $L_p$-norm. This finishes the proof. \hfill $\qed$

\paragraph{Declaration of generative AI use}

The authors used ChatGPT 5.6 Sol for literature searches, exploratory
development, and the preparation of portions of the exposition and
LaTeX source. All mathematical arguments, calculations, references, and conclusions were independently checked and verified by the authors, who take full responsibility for the contents of the paper.

\vspace{0.5cm}
\noindent{\bf Author's Addresses:}

\noindent Josef Dick, School of Mathematics and Statistics, The University of New South Wales, Sydney, NSW 2052, Australia. 

\noindent Email: josef.dick@unsw.edu.au \\

\noindent Friedrich Pillichshammer, Institut f\"{u}r Finanzmathematik und Angewandte Zahlentheorie, Universit\"{a}t Linz, Altenbergerstra{\ss}e 69, A-4040 Linz, Austria. 

\noindent Email: friedrich.pillichshammer@jku.at

\end{document}